\numberwithin{equation}{section}
\def\XXint#1#2#3{{\setbox0=\hbox{$#1{#2#3}{\int}$ }
\vcenter{\hbox{$#2#3$ }}\kern-.6\wd0}}
\date{}
\newcommand{\pat}{\partial_t}
\newcommand{\pax}{\partial_{x_2}}
\newcommand{\dxdt}{\ {\rm d}x{\rm d}t}
\newcommand{\abs}[1]{\ensuremath{\left| #1 \right|}}
\newcommand{\ep}{\varepsilon}
\newcommand{\id}{\mathbb{I}}
\newcommand{\Sym}{\mathcal{S}}
\newcommand{\R}{\mathbb{R}}
\newcommand{\tu}{\mathbb{U}}
\newcommand{\Div}{{\rm div}_x} 
\newcommand{\vr}{\varrho}
\newcommand{\vc}[1]{{\bf #1}}
\newcommand{\vcc}{\vc{c}}
\newcommand{\vv}{\vc{v}}
\newcommand{\vm}{\vc{m}}
\newcommand{\Grad}{\nabla_x}
\newcommand{\dx}{\,{\rm d}x}
\newcommand{\dt}{\,{\rm d}t}
\newcommand{\bfphi}{\boldsymbol{\varphi}}
\newcommand{\sil}{\rightarrow}
\newtheorem{theorem}{Theorem}[section]
\newtheorem{lemma}[theorem]{Lemma}
\newtheorem{definition}[theorem]{Definition}
\newtheorem{proposition}[theorem]{Proposition}
\title{Non--uniqueness of delta shocks and contact discontinuities in the multi--dimensional model of Chaplygin gas}
\author{Jan B\v rezina$^1$\footnote{h.brezina@gmail.com} \and  Ond\v rej Kreml$^2$\footnote{kreml@math.cas.cz} \and
V\'aclav M\'acha$^2$\footnote{macha@math.cas.cz}}
\begin{document}
\maketitle

\bigskip

\centerline{$^1$Tokyo Institute of Technology}

\centerline{2-12-1 Ookayama, Meguro-ku, Tokyo, 152-8550, Japan} 

\centerline{$^2$Institute of Mathematics of the Czech Academy of Sciences}

\centerline{\v Zitn\' a 25, CZ-115 67 Praha 1, Czech Republic}
\medskip

\bigskip

\textbf{Abstract:} We study the Riemann problem for the isentropic compressible Euler equations in two space dimensions with the pressure law describing the Chaplygin gas. It is well known that there are Riemann initial data for which the 1D Riemann problem does not have a classical $BV$ solution, instead a $\delta$-shock appears, which can be viewed as a generalized measure--valued solution with a concentration measure in the density component. We prove that in the case of two space dimensions there exists infinitely many bounded admissible weak solutions starting from the same initial data. Moreover, we show the same property also for a subset of initial data for which the classical 1D Riemann solution consists of two contact discontinuities. As a consequence of the latter result we observe that any criterion based on the principle of maximal dissipation of energy will not pick the classical 1D solution as the physical one.

\section{Introduction} 
We consider the isentropic compressible Euler system in the whole two--dimensional space, i.e. 
\begin{equation}
	\begin{split}
		\partial_t \vr + \Div (\vr\vv) &= 0, \\
		\partial_t (\vr\vv) + \Div(\vr\vv\otimes \vv)+\Grad p(\vr)  &= 0, \\
		\vr(0,\cdot) &= \vr^0, \\
		\vv(0,\cdot) &= \vv^0,
	\end{split}
	\label{eq:euler}
\end{equation}
where $\vr(t,x):[0,\infty)\times\R^2\sil\R^+$ is the unknown density and $\vv(t,x):[0,\infty)\times\R^2\sil\R^2$ the unknown velocity field. Throughout this paper we use the  notation $\vv = (v_1,v_2)$ for the components of the velocity  and $x=(x_1,x_2)$ for the space variable. We work with the pressure law describing the Chaplygin gas
\begin{equation}\label{eq:chap}
    p(\vr) = -\frac{1}{\vr}.
\end{equation}
This model has been introduced by Chaplygin \cite{chap}, see also \cite{tsien} or \cite{vK}, as an approximation for the calculation of a lifting force on a wing of a plane. Recently, the equation \eqref{eq:chap} was also considered as a suitable model for dark energy, see for example \cite{goetal}.

Note that \eqref{eq:chap} satisfies the standard condition $p' > 0$ which guarantees the hyperbolicity of the system of equations \eqref{eq:euler}. It is well known that solutions to hyperbolic systems of conservation laws may develop singularities even if the initial conditions are smooth, therefore it is reasonable to work with weaker notions of solutions. On the other hand, weak solutions may be non--unique and some admissibility conditions have to be considered. A natural way motivated by the 1D theory and the theory of scalar conservation laws is to use the entropy inequality. In the case of system \eqref{eq:euler}, the only mathematical entropy is actually the total energy $\eta = \vr\ep(\vr) + \vr \frac{\abs{\vv}^2}{2}$, where $\ep(\vr)$ is the internal energy related to the pressure $p(\vr)$ through $p(\vr) = \vr^2\ep'(\vr)$. In the case of the pressure law \eqref{eq:chap} we get
\begin{equation}\label{eq:ep}
    \ep(\vr) = \frac{1}{2\vr^2}.
\end{equation}
In this paper we call solutions {\em admissible} if they satisfy the entropy inequality
\begin{equation}\label{eq:energy}
    \pat\left(\vr\ep(\vr) + \vr \frac{\abs{\vv}^2}{2}\right) + \Div\left(\left(\vr\ep(\vr) + \vr \frac{\abs{\vv}^2}{2} + p(\vr)\right)\vv\right) \leq 0,
\end{equation}
which can be also viewed  as a form of energy balance.

We will focus on the study of the Riemann problem, i.e. the problem with the initial data
\begin{equation}\label{eq:Riem}
    (\vr^0,\vv^0)(x) = \left\{\begin{aligned}
        &(\vr_-,\vv_-) \qquad \text{ for } x_2 < 0 \\
        &(\vr_+,\vv_+) \qquad \text{ for } x_2 > 0,
    \end{aligned}\right.
\end{equation}
with $\vr_\pm, \vv_\pm$ constant, $\vr_\pm > 0$. As the initial data \eqref{eq:Riem} are one--dimensional, one can use the classical 1D theory to construct one--dimensional solutions and these solutions are further referenced as {\em classical 1D solutions}. This paper studies the question of non--uniqueness of such solutions. 

The groundbreaking theory of De Lellis and Sz\' ekelyhidi \cite{DLSz1}, \cite{DLSz2} developed for the incompressible Euler system yielded several applications in the compressible world. Already in \cite{DLSz2} the authors proved the existence of bounded initial data $(\vr^0,\vv^0)$ for which there exists infinitely many admissible weak solutions to the system \eqref{eq:euler}. Chiodaroli \cite{Ch} and Feireisl \cite{Fe} proved that such initial data can be taken with smooth density and later Chiodaroli, De Lellis and Kreml \cite{ChDLKr} proved non--uniqueness for Lipschitz initial data with the pressure law $p(\vr) = \vr^2$. Here the authors in particular used the Riemann problem as a building block in their proof.

This motivated further studies of uniqueness and non--uniqueness of bounded admissible weak solutions to the Riemann problem for \eqref{eq:euler} with $p(\vr) = \vr^\gamma$ with $\gamma \geq 1$. Uniqueness of the 1D solution in the case when the 1D solution consists only of rarefaction waves was proved by Chen and Chen \cite{Chen} (see also \cite{FeiKre}), whereas non--uniqueness was addressed in the series of papers \cite{ChiKre14}, \cite{ChiKre17}, \cite{MarKli17}, \cite{BrChKr} with the final result being non--uniqueness for all Riemann initial data allowing for 1D solution containing a shock. It is interesting to mention that the question whether a stationary solution with the initial data $\vr_-=\vr_+ = 1$, $\vv_\pm = (\pm 1,0)$ is unique or not is still open, this solution consists of a single contact discontinuity created by a discontinuity in the first component of the velocity. In the incompressible case it was shown by Sz\' ekelyhidi \cite{sz} that such solution is not unique.

We also mention the result of Klingenberg and Markfelder \cite{MaKl2}, where the authors proved that the system \eqref{eq:euler} with $p(\rho) = \rho^\gamma$, $\gamma \geq 1$ may have infinitely solutions satisfying the energy equality, i.e. \eqref{eq:energy} holds with the equality sign. This result is again based on the study of the Riemann problem. Recently, non--uniqueness of admissible weak solutions was also proved for the Riemann problem for the full Euler system (including temperature) in the case when the 1D solution contains two shocks, see \cite{AlKlKrMaMa}.

It is well known that for a certain range of Riemann initial data \eqref{eq:Riem} the Riemann problem for the Chaplygin gas does not possess a classical bounded BV solution and instead there is a solution in the form of a $\delta$-shock, which can be viewed as a generalized measure--valued solution. One of the main results of this paper shows that when the two--dimensional system of equations is considered, there exists infinitely many bounded admissible weak solutions for initial data allowing the formation of a $\delta$-shock. We also prove that in some cases when the classical 1D solution consists of two contact discontinuities, there also exists infinitely many admissible weak solutions. 

The latter result has interesting consequences with respect to the admissibility criteria based on the concept of maximal dissipation of energy. We discuss this issue in the final section.

Our main theorems follow.

\begin{theorem}\label{t:main}
Let $p(\vr) = -\vr^{-1}$. For every Riemann initial data \eqref{eq:Riem} such that the classical 1D solution consists of a $\delta$-shock, that is whenever
\begin{equation}\label{eq:deltacond}
    v_{-2} - v_{+2} \geq \frac{1}{\vr_-} + \frac{1}{\vr_+},
\end{equation}
there exists infinitely many bounded admissible weak solutions to \eqref{eq:euler}, \eqref{eq:Riem}. If, moreover, $\vr_- = \vr_+$, then there exists infinitely many bounded admissible weak solutions satisfying the energy equality.
\end{theorem}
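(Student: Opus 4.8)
The plan is to use the convex integration method of De Lellis and Sz\'ekelyhidi \cite{DLSz1,DLSz2}, in the form tailored to the Riemann problem by Chiodaroli, De Lellis and Kreml \cite{ChDLKr}. The first step is the standard reduction: it suffices to produce an \emph{admissible fan subsolution}, i.e.\ numbers $\mu_-<\mu_+$, a density $\vr_1>0$, a momentum $\vm_1\in\R^2$, a matrix $\tu_1\in\Sz$ and a constant $C_1>0$, such that, putting in the sector $\mu_-t<x_2<\mu_+t$ the relaxed momentum flux $\tu_1+\big(\tfrac{C_1}{2}+p(\vr_1)\big)\id$ and the relaxed energy density $\vr_1\ep(\vr_1)+\tfrac{C_1}{2}$, one has: (i) the Rankine--Hugoniot relations of \eqref{eq:euler} across $x_2=\mu_\pm t$ with the two Riemann states of \eqref{eq:Riem}; (ii) the strict inequality $\tfrac{\vm_1\otimes\vm_1}{\vr_1}-\tu_1<\tfrac{C_1}{2}\id$ in the sense of positive definite matrices; (iii) the entropy inequality \eqref{eq:energy} in the Rankine--Hugoniot sense across $x_2=\mu_\pm t$. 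Each such subsolution then yields infinitely many bounded admissible weak solutions of \eqref{eq:euler}, \eqref{eq:Riem} agreeing with it outside the sector. Note that no concentration is required: the $\delta$-shock is replaced by a sector of positive width carrying a finite (though possibly large) constant density $\vr_1$.

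By the Galilean invariance of \eqref{eq:euler}, \eqref{eq:chap} I would first normalise the data so that $v_{-2}=-v_{+2}=a$ and $v_{-1}=-v_{+1}=b$; then \eqref{eq:deltacond} reads $2a\ge\vr_-^{-1}+\vr_+^{-1}$, in particular $a>0$. Solving the six scalar equations in (i): the two mass balances determine $\vr_1$ and the $x_2$--component of $\vm_1$; the two tangential momentum balances determine the $x_1$--component of $\vm_1$ and the off--diagonal entry of $\tu_1$; the two normal momentum balances both prescribe only the combination $(\tu_1)_{22}+\tfrac{C_1}{2}$, so their compatibility is a single scalar equation on $(\mu_-,\mu_+)$, and along the resulting curve $C_1$ stays free. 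It thus remains to pick $(\mu_-,\mu_+)$ on that curve and then $C_1$ so that (ii) and (iii) hold.

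This is where the real work lies, and the step I expect to be the main obstacle. Condition (ii) amounts to positive definiteness of a fixed symmetric $2\times2$ matrix whose decisive entry is increasing in $C_1$, so it forces $C_1>C_{\mathrm{sub}}(\mu_\pm)$, whereas (iii) at the two interfaces forces $C_1\le C_{\mathrm{en}}^{\pm}(\mu_\pm)$. One has to exhibit $(\mu_-,\mu_+)$ on the compatibility curve for which the window $C_{\mathrm{sub}}<\min\{C_{\mathrm{en}}^{-},C_{\mathrm{en}}^{+}\}$ is nonempty, and I would look for it in the regime where the fan collapses onto the classical $\delta$-shock (the $\mu_\pm$ close to the $\delta$-shock speed). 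There the quantity $C_{\mathrm{en}}^{\pm}-C_{\mathrm{sub}}$ can be written out explicitly; a number of terms cancel --- in the symmetric case $\vr_-=\vr_+=:\vr$, for instance, every term quadratic in $b$ drops out --- and what survives equals, up to a positive factor and a strictly positive lower--order correction, a quantity that is nonnegative precisely when \eqref{eq:deltacond} holds (in the symmetric case a positive multiple of $a^2\vr^2-1\ge0$). The general case needs an analogous but heavier computation. Hence the window is open, which yields the claimed infinite family and proves the first assertion.

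For the second assertion assume $\vr_-=\vr_+=:\vr$. After the normalisation above the initial data is invariant under the reflection $\Psi:(t,x_1,x_2,\vr,v_1,v_2)\mapsto(t,-x_1,-x_2,\vr,-v_1,-v_2)$, and I would restrict to $\Psi$--symmetric subsolutions; this forces $\mu_-=-\mu_+$ and a vanishing $x_2$--component of $\vm_1$, collapses (i) to the three scalar equations of a single interface, makes the compatibility constraint automatic, and gives $C_{\mathrm{en}}^{-}=C_{\mathrm{en}}^{+}=:C_{\mathrm{en}}$, while leaving $\mu_+$, the tangential part of $\vm_1$ and $C_1$ free. Taking $C_1=C_{\mathrm{en}}$ turns (iii) into an equality across both interfaces; inside the sector the convex integration solutions satisfy $\tfrac12\vr|\vv|^2\equiv\tfrac{C_1}{2}$, so there the energy density is constant, and since the density is constant as well the energy balance holds there with equality too; hence these solutions satisfy \eqref{eq:energy} with equality on all of $(0,\infty)\times\R^2$. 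It remains only to check that the strict inequality (ii) survives at $C_1=C_{\mathrm{en}}$ --- but that is exactly the openness of the window established above. Letting $\mu_+$ (equivalently the width of the sector) range over its admissible interval then yields infinitely many admissible weak solutions satisfying the energy equality.
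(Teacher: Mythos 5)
Your overall strategy is the right one and coincides with the paper's: reduce to the existence of a single admissible fan subsolution in the sense of \cite{ChDLKr} (Proposition \ref{p:subs}), and look for it in the regime where the fan degenerates onto the $\delta$-shock, which in the paper's parametrization is exactly the limit of large intermediate density $\vr_1$. Your structural variant --- a single intermediate state rather than the two states of Definition \ref{d:subs} --- is workable but costs you something: with one state the tangential momentum balances force $\delta_1\neq\alpha_1\beta_1$ whenever $v_{-1}\neq v_{+1}$, so the determinant part of the subsolution condition \eqref{eq:sub_det} acquires a nonvanishing cross term that you must control; the paper's choice $\vr_1=\vr_2$, $\beta_1=\beta_2$, $\alpha_1=v_{-1}$, $\alpha_2=v_{+1}$ puts the tangential jump on a middle contact interface, makes $\delta_i=\alpha_i\beta$ exactly, and decouples the tangential component for free. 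Also, in your symmetric reduction the tangential part of $\vm_1$ is not free --- the reflection symmetry forces it to vanish --- though this is harmless, since a single subsolution already produces infinitely many solutions.

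The genuine gap is the central quantitative step, which you explicitly defer: the proof that the window $C_{\mathrm{sub}}<\min\{C^-_{\mathrm{en}},C^+_{\mathrm{en}}\}$ is nonempty for \emph{general} data satisfying \eqref{eq:deltacond}, not only for $\vr_-=\vr_+$. This is precisely the content of Lemmas \ref{l:41} and \ref{l:42} together with the reduction of the admissibility inequalities to \eqref{eq:adf1}--\eqref{eq:adf2}, and two things you do not mention enter decisively there. First, the Chaplygin identity $p(r)+p(s)-2rs\frac{\ep(r)-\ep(s)}{r-s}=0$ kills the term that, for $p(\vr)=\vr^\gamma$, obstructs the energy inequality; without observing this, \eqref{eq:E_left ss}--\eqref{eq:E_right ss} do not reduce to the tractable form \eqref{eq:E_left ss2}--\eqref{eq:E_right ss2}. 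Second, the positivity of $\ep_1$ (equivalently of the $(2,2)$-entry of your matrix, which is \emph{not} improved by increasing $C_1$, because the normal momentum balance ties $\gamma_1$ to $C_1$ and fixes the combination $\tfrac{C_1}{2}-\gamma_1$) is a genuine constraint on $(\mu_-,\mu_+)$; for $\vr_-\neq\vr_+$ it comes down to $\sqrt{B}\ge \frac{R}{\vr_-}+\vr_+u$, i.e.\ to the quadratic inequality \eqref{eq:bubu} in $u=v_{-2}-v_{+2}$, which holds exactly on the range \eqref{eq:deltacond}. Until this asymmetric computation is carried out, your argument establishes the theorem only in the case $\vr_-=\vr_+$.
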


\begin{theorem}\label{t:main2}
Let $p(\vr) = -\vr^{-1}$. Assume that the Riemann initial data \eqref{eq:Riem} satisfy
\begin{equation}\label{eq:Th2cond}
    \max\left\{\frac{1}{\vr_-}, \frac{1}{\vr_+}\right\} < v_{-2} - v_{+2} < \frac{1}{\vr_-} + \frac{1}{\vr_+}.
\end{equation}
Then there exists infinitely many bounded admissible weak solutions to \eqref{eq:euler}, \eqref{eq:Riem}. If, moreover, $\vr_- = \vr_+$, then there exists infinitely many bounded admissible weak solutions satisfying the energy equality.
\end{theorem}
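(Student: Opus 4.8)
Following the strategy developed for the polytropic case in \cite{ChiKre14,ChiKre17,BrChKr}, I would reduce Theorem~\ref{t:main2} to the construction of a single \emph{admissible fan subsolution} and then invoke the convex integration machinery of De Lellis and Sz\'ekelyhidi \cite{DLSz1,DLSz2}. Recall that such a subsolution consists of speeds $\nu_0<\nu_1<\dots<\nu_N$ partitioning $\{t>0\}$ into sectors, the constant states $(\vr_-,\vv_-)$ and $(\vr_+,\vv_+)$ on the leftmost and rightmost sectors, constant states on all but one of the remaining sectors, and on the distinguished \emph{turbulent} sector $P$ constants $\vr_P>0$, $\vm_P\in\R^2$, $\tu_P\in\Sz$ and $C_P>0$. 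They must satisfy: the continuity and momentum equations in the distributional sense, which amounts to Rankine--Hugoniot relations along each line $x_2=\nu_i t$, the momentum flux on $P$ being $\tu_P+\bigl(\tfrac{C_P}{2}+p(\vr_P)\bigr)\id$; the strict inequality $\frac{\vm_P\otimes\vm_P}{\vr_P}-\tu_P<\frac{C_P}{2}\,\id$; and energy admissibility, i.e.\ \eqref{eq:energy} in the distributional sense with the energy density on $P$ replaced by $\vr_P\ep(\vr_P)+\frac{C_P}{2}$. Given such a subsolution, convex integration inside $P$ yields infinitely many bounded admissible weak solutions of \eqref{eq:euler}, \eqref{eq:Riem} agreeing with it outside $P$; and if the energy admissibility holds with equality, these solutions satisfy the energy equality.

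\smallskip

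\noindent The construction would exploit the structure of the classical 1D solution. Under \eqref{eq:Th2cond} it contains a constant intermediate state whose density $\vr_m$ and normal velocity $v_{m2}$ satisfy, by the linear--degeneracy relations across the two contacts, $\tfrac{2}{\vr_m}=\tfrac{1}{\vr_-}+\tfrac{1}{\vr_+}-(v_{-2}-v_{+2})$; the upper bound in \eqref{eq:Th2cond} is exactly $\vr_m>0$, whose failure produces the $\delta$--shock of Theorem~\ref{t:main}. I would keep (a small perturbation of) the two contact lines of the 1D solution as two of the $\nu_i$'s — contact discontinuities of \eqref{eq:euler} being admissible, indeed energy conserving — and \emph{replace the constant intermediate state by the turbulent sector} $P$. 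Matching the continuity equation and the normal momentum component across the boundaries of $P$ constrains $\vr_P$ (near $\vr_m$), the normal momentum $(\vm_P)_2$ and the enclosing speeds in terms of the data, leaving the tangential momentum $(\vm_P)_1$, the traceless matrix $\tu_P$ and the constant $C_P$ free, subject only to the tangential Rankine--Hugoniot relations, the strict matrix inequality and the energy inequality. Eliminating $\tu_P$ and $(\vm_P)_1$, these should collapse to a scalar condition $C_P^{\min}<C_P<C_P^{\max}$ with a nonempty open interval of admissible $C_P$ precisely when the data satisfy \eqref{eq:Th2cond}; each such $C_P$ gives a distinct admissible fan subsolution, hence infinitely many admissible weak solutions.

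\smallskip

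\noindent For the energy--equality refinement, when $\vr_-=\vr_+$ the formula above makes the two outer contacts mirror images and, those contacts conserving energy, the only possible net energy production sits in $P$. One can then choose $C_P$, $\tu_P$ and $(\vm_P)_1$ so that the energy flux balances exactly across both boundaries of $P$, turning the admissibility inequality on $P$ into an equality; the convex integration step preserves this, producing infinitely many admissible weak solutions with the energy equality.

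\smallskip

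\noindent The main obstacle is the feasibility analysis of the second paragraph: showing that the (at first glance over-determined) system of Rankine--Hugoniot relations along the fan lines, the strict symmetric--matrix inequality $\frac{\vm_P\otimes\vm_P}{\vr_P}-\tu_P<\frac{C_P}{2}\id$, and the energy inequality has a one--parameter family of solutions exactly in the regime \eqref{eq:Th2cond}. The lower bound $\max\{\tfrac{1}{\vr_-},\tfrac{1}{\vr_+}\}<v_{-2}-v_{+2}$ should be precisely what gives the turbulent sector enough spare energy — equivalently, forces $\vr_m$ large and the intermediate sound speed $\vr_m^{-1}$ small — to ensure $C_P^{\min}<C_P^{\max}$, while $v_{-2}-v_{+2}<\tfrac{1}{\vr_-}+\tfrac{1}{\vr_+}$ keeps $\vr_m>0$; one must also verify $\nu_0<\dots<\nu_N$ so that the sectors are genuinely ordered. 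Disentangling these coupled scalar inequalities and identifying which one is binding is where the real work lies; the convex integration input is then used as a black box from \cite{DLSz1,DLSz2}, exactly as in \cite{ChiKre14,ChiKre17,BrChKr}.
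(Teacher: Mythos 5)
Your overall strategy --- reduce to the existence of a single admissible fan subsolution and feed it to the convex--integration machinery of Proposition~\ref{p:subs} --- is exactly the route the paper takes, and your reading of the two bounds in \eqref{eq:Th2cond} is essentially right: the upper bound is $\vr_m>0$ in \eqref{eq:middlestate}, and the lower bound is equivalent to $\vr_m>2\max\{\vr_-,\vr_+\}$, which is precisely the room the construction needs. Two structural remarks, though. First, the paper's fan has \emph{two} turbulent sectors $P_1,P_2$ (with $\vr_1=\vr_2$, $\beta_1=\beta_2=\beta$, but $\alpha_1=v_{-1}$, $\alpha_2=v_{+1}$ and in general $C_1\neq C_2$), separated by a middle interface of speed $\nu_0=\beta$ that carries the tangential jump; your single sector $P$ with a free tangential momentum is a different ansatz whose solvability you would have to check separately. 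Second, a key simplification specific to the Chaplygin law, which you do not mention, is that $p(r)+p(s)-2rs\frac{\ep(r)-\ep(s)}{r-s}\equiv 0$; this is what collapses the energy inequalities on the outer interfaces to the tractable form \eqref{eq:adf1}--\eqref{eq:adf2} and is the structural reason the argument closes here while being delicate for $p(\vr)=\vr^\gamma$.

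The genuine gap is that the entire quantitative feasibility analysis --- which you yourself flag as ``where the real work lies'' --- is the proof, and it is absent. Concretely, what is missing is: (i) the parametrization of the family of candidate subsolutions by $\vr_1>\max\{\vr_-,\vr_+\}$, with $\nu_\pm$, $\beta$ and $\ep_1$ given by explicit formulas such as \eqref{eq:num}--\eqref{eq:ep1}; (ii) Lemma~\ref{l:51}, showing that $\ep_1>0$ exactly on an interval $(\max\{\vr_-,\vr_+\},\vr_{max})$ (in contrast with the $\delta$--shock regime, where $\ep_1>0$ for all large $\vr_1$); (iii) Lemma~\ref{l:52}, the ordering $\nu_-<\nu_0=\beta<\nu_+$; (iv) the reduction to $\beta=0$ by a Galilean transformation, after which \eqref{eq:adf1}--\eqref{eq:adf2} admit a positive $\ep_2$ as soon as $\vr_1>2\max\{\vr_-,\vr_+\}$; and (v) the verification that $\vr_{max}>2\max\{\vr_-,\vr_+\}$, which reduces to $(\vr_+u-1)(\vr_-\vr_+u+R)>0$ and is exactly where the lower bound in \eqref{eq:Th2cond} is used. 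Without (i)--(v) the claim that the Rankine--Hugoniot relations, the strict matrix inequality and the energy inequality are simultaneously solvable ``precisely when the data satisfy \eqref{eq:Th2cond}'' is an unverified assertion, not a proof; likewise the energy--equality refinement for $\vr_-=\vr_+$ requires the explicit choice $\ep_2=\ep_1\frac{\vr_1-2\vr_-}{\vr_-}$ turning both \eqref{eq:adf1} and \eqref{eq:adf2} into equalities, which your sketch only gestures at.
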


The paper is structured as follows. In Section \ref{s:Riem} we recall the theory of classical 1D solutions to the Riemann problem for Chaplygin gas. In Section \ref{s:Subs} we introduce all the necessary material for the proofs of Theorems \ref{t:main} and \ref{t:main2}. In Section \ref{s:Proof1} we prove Theorem \ref{t:main} and in Section \ref{s:Proof2} we prove Theorem \ref{t:main2}. Finally, in Section \ref{s:Rem} we discuss the consequences of Theorem \ref{t:main2} with respect to two admissibility criteria based on the concept of maximal dissipation of energy.

\section{1D solutions to the Riemann problem}\label{s:Riem}

In this section we present the classical theory of one--dimensional solutions to the Riemann problem for system \eqref{eq:euler} with the Chaplygin gas pressure law \eqref{eq:chap}. For more details about the general theory of self-similar BV solutions we refer for example to \cite{Dafermos16}, the theory for the 1D equations for Chaplygin gas including $\delta$-shocks can be found for example in \cite[Section 2.1]{guo}.

To start this investigation it is reasonable to study the system in conservative variables. We search for functions $\vr(t,x_2)$, $\vm(t,x_2) = (m_1,m_2)(t,x_2) = (\vr v_1,\vr v_2)(t,x_2)$ that depend in the space variable only on $x_2$ and solve the two--dimensional isentropic Euler system \eqref{eq:euler} formulated as
\begin{equation}\label{eq:1DRiem}
\begin{split}
    \pat \vr + \pax m_2 &= 0, \\
    \pat m_1 + \pax \frac{m_1m_2}{\vr} &= 0, \\
    \pat m_2 + \pax \left(\frac{m_2^2}{\vr} - \frac{1}{\vr}\right) &= 0
\end{split}    
\end{equation}
with the initial data
\begin{equation}\label{eq:1DRiemIC}
(\vr^0(x),\vm^0(x)) = \left\{
    \begin{aligned}
        &(\vr_-,\vm_-) = (\vr_-,\vr_-\vv_-) \quad \text{ for } x_2 < 0 \\
        &(\vr_+,\vm_+) = (\vr_+,\vr_+\vv_+) \quad \text{ for } x_2 > 0.
    \end{aligned}\right.
\end{equation}

The state vector is thus defined as $U = (\vr,\vm)^T$ and the flux is $F(U) = (m_2,\frac{m_1m_2}{\vr},\frac{m_2^2}{\vr} - \frac{1}{\vr} )^T$. The eigenvalues of the Jacobian matrix of $F(U)$ are 
\begin{equation}\label{eq:eigenval}
    \lambda_1 = \frac{m_2}{\vr} - \frac{1}{\vr}, \qquad \lambda_2 = \frac{m_2}{\vr}, \qquad \lambda_3 = \frac{m_2}{\vr} + \frac{1}{\vr}
\end{equation}
and the right eigenvectors are
\begin{equation}\label{eq:eigenvec}
    R_1 = \left(
    \begin{array}{c}
                1 \\ 
        \frac{m_1}{\vr} \\
        \frac{m_2}{\vr} - \frac{1}{\vr}
    \end{array}\right), \qquad    R_2 = \left(
    \begin{array}{c}
        0 \\
        1 \\
        0
    \end{array}\right), \qquad
    R_3 = \left(
    \begin{array}{c}
        1 \\
        \frac{m_1}{\vr} \\
        \frac{m_2}{\vr} + \frac{1}{\vr}
    \end{array}\right).
\end{equation}

Since we observe that $\nabla\lambda_i\cdot R_i = 0$ for all $i = 1,2,3$, we conclude that all three characteristic families of the system \eqref{eq:1DRiem} are linearly degenerate and therefore all elementary waves are contact discontinuities. Moreover, the variable $m_1$ appears only in the equation \eqref{eq:1DRiem}$_2$ and thus the system can be decoupled. In particular, if we consider bounded solutions, we first solve the system \eqref{eq:1DRiem}$_1$, \eqref{eq:1DRiem}$_3$ and to its solution we add a contact discontinuity with the speed $\lambda_2 = \frac{m_2}{\vr} = v_2$, where the velocity component $v_1$ jumps from $v_{-1}$ to $v_{+1}$.

As we already observed, bounded solutions of \eqref{eq:1DRiem}$_1$, \eqref{eq:1DRiem}$_3$ are contact discontinuities with the speeds $\lambda_1$ or $\lambda_3$, satisfying the Rankine-Hugoniot conditions 
\begin{equation}\label{eq:RHgeneral}
    \begin{split}
        \sigma[\vr] &= [\vr v_2], \\
        \sigma[\vr v_2] &= [\vr v_2^2 - \frac{1}{\vr}],
    \end{split}
\end{equation}
where $[q] = q_R - q_L$. Here, the index $R$ denotes the constant state on the right and $L$ denotes the constant state on the left, and $\sigma$ is the speed of the contact discontinuity. We easily obtain for the $1$-characteristic family that
\begin{equation}
\sigma = \lambda_1 = v_{2R} - \frac{1}{\vr_R} = v_{2L} - \frac{1}{\vr_L}
\end{equation}
and for the $3$-characteristic family that
\begin{equation}
\sigma = \lambda_3 = v_{2R} + \frac{1}{\vr_R} = v_{2L} + \frac{1}{\vr_L}.
\end{equation}

With this information we can draw the characteristic curves starting from a point $(\vr_-,v_{-2})$ in the state space as
\begin{align}
    v_2 - \frac{1}{\vr} &= v_{-2} - \frac{1}{\vr_-} \label{eq:curve1},    \\
v_2 + \frac{1}{\vr} &= v_{-2} + \frac{1}{\vr_-}   \label{eq:curve2} 
\end{align}
 and conclude the following:
\begin{itemize}
    \item If $(\vr_+,v_{+2}) = (\vr_-,v_{-2})$, then  this constant state is a solution.
    \item If $(\vr_+,v_{+2})$ lies on one of the curves \eqref{eq:curve1}, \eqref{eq:curve2}, then the solution consists of one contact discontinuity.
    \item If $(\vr_+,v_{+2})$ does not lie on neither of the curves \eqref{eq:curve1}, \eqref{eq:curve2} and it holds
    \begin{equation}\label{eq:2CDcond}
        v_{-2} - v_{+2} < \frac{1}{\vr_-} + \frac{1}{\vr_+},
    \end{equation}
    then the solution consists of two contact discontinuities with the intermediate constant state $(\vr_m,v_{m2})$ given by
    \begin{equation}\label{eq:middlestate}
        \frac{1}{\vr_m} = \frac 12 \left(v_{+2}-v_{-2}\right) + \frac 12\left(\frac{1}{\vr_+} + \frac{1}{\vr_-}\right), \qquad v_{m2} =  \frac 12 \left(v_{+2}+v_{-2}\right) + \frac 12\left(\frac{1}{\vr_+} - \frac{1}{\vr_-}\right).
    \end{equation}
    \item In the case 
    \begin{equation}\label{eq:deltacond00}
        v_{-2} - v_{+2} \geq \frac{1}{\vr_-} + \frac{1}{\vr_+}
    \end{equation}
    there does not exist a self-similar BV solution to the Riemann problem \eqref{eq:1DRiem}$_1$, \eqref{eq:1DRiem}$_3$ with the initial data \eqref{eq:1DRiemIC}.
\end{itemize}

We will now discuss the last case in detail. We introduce solutions with Dirac delta measures supported at a jump, i.e. we search for a solution in the form
\begin{equation}
    (\vr,v_1,v_2)(t,x_2) = \left\{\begin{aligned}
        &(\vr_-,v_{-1},v_{-2}) \qquad &\text{ for } x_2 < \sigma t \\
        &(\omega(t)\delta_{x_2-\sigma t},\xi,\sigma) \qquad &\text{ for } x_2 = \sigma t \\
        &(\vr_+,v_{+1},v_{+2}) \qquad &\text{ for } x_2 > \sigma t,
    \end{aligned}\right.
\end{equation}
where the $\delta$-shock speed $\sigma$ is also the value of the second component of the velocity on the $\delta$-shock, $\xi$ is the value of the first component of the velocity on the $\delta$-shock, and together with the density weight $\omega(t)$ they satisfy the generalized Rankine-Hugoniot conditions
\begin{align}
    \frac{\mathrm{d} \omega(t)}{\dt} &= \sigma(\vr_+-\vr_-) - (\vr_+v_{+2} - \vr_-v_{-2}), \label{eq:deltaeq1}\\
    \frac{\mathrm{d} \omega(t)\xi}{\dt} &= \sigma(\vr_+v_{+1}-\vr_-v_{-1}) - (\vr_+v_{+1}v_{+2} - \vr_-v_{-1}v_{-2}), \label{eq:deltaeq15} \\
    \frac{\mathrm{d} \omega(t)\sigma}{\dt} &= \sigma(\vr_+v_{+2}-\vr_-v_{-2}) - \left(\vr_+v_{+2}^2 - \vr_-v_{-2}^2 - \frac{1}{\vr_+} + \frac{1}{\vr_-}\right). \label{eq:deltaeq2}
\end{align}
Here we set the initial data $\omega(0) = 0$ and we define $\frac{1}{\vr} = 0$ for $x_2 = \sigma t$. This yields the following solution to \eqref{eq:deltaeq1}-\eqref{eq:deltaeq2}
\begin{align} \label{eq:omega1}
    \omega(t) &= \sqrt{\vr_+\vr_-\left((v_{+2} - v_{-2})^2 - \left(\frac{\vr_- - \vr_+}{\vr_-\vr_+} \right)^2\right)}t, \\
    \sigma &= \frac{\vr_+v_{+2} - \vr_-v_{-2} + \sqrt{\vr_+\vr_-\left((v_{+2} - v_{-2})^2 - \left(\frac{\vr_- - \vr_+}{\vr_-\vr_+}\right)^2\right)}}{\vr_+-\vr_-} \label{eq:sigma1}, \\
    \xi &= \frac{(\vr_+v_{+1}-\vr_-v_{-1})\sqrt{\vr_+\vr_-\left((v_{+2} - v_{-2})^2 - \left(\frac{\vr_- - \vr_+}{\vr_-\vr_+}\right)^2\right)} + \vr_-\vr_+(v_{+2}-v_{-2})(v_{+1}-v_{-1})}{(\vr_+-\vr_-)\sqrt{\vr_+\vr_-\left((v_{+2} - v_{-2})^2 - \left(\frac{\vr_- - \vr_+}{\vr_-\vr_+}\right)^2\right)}} \label{eq:xi1}
\end{align}
in the case $\vr_+ \neq \vr_-$ and
\begin{align}
\omega(t) &= (\vr_-v_{-2} - \vr_+v_{+2})t, \label{eq:omega2}\\
\sigma &= \frac 12 (v_{+2} + v_{-2}), \label{eq:sigma2} \\
\xi &= \frac 12 (v_{+1} + v_{-1}) \label{eq:xi2}
\end{align}
in the case $\vr_+ = \vr_-$. Note that an easy calculation shows $v_{+2} < \sigma < v_{-2}$ and similarly one can show that $\min\{v_{-1},v_{+1}\} \leq \xi \leq \max\{v_{-1},v_{+1}\}$ with equalities if and only if $v_{-1} = v_{+1}$.

Next we specify the sense in which the Euler system is satisfied in the case of a $\delta$-shock. 
Such solution can be viewed as a generalized measure--valued solution $(\nu_{t,x}, m_\vr)$, where $\nu_{t,x} \in L^\infty_{w}((0,\infty)\times \R^2; \mathcal{P}(\R^+ \times \R^2))$ is a standard atomic probability measure defined simply as 
\begin{equation}\label{eq:nutx}
    \nu_{t,x} = \left\{
    \begin{aligned}
        \delta_{(\vr_-,\vv_{-})} \qquad \text{ for } x_2 < \sigma t \\
        \delta_{(\vr_+,\vv_{+})} \qquad \text{ for } x_2 > \sigma t
    \end{aligned}
        \right.
\end{equation}
and $m_\vr \in \mathcal{M}([0,\infty)\times \R^2)$ is a concentration measure defined by 
\begin{equation}\label{eq:murho}
   m_\vr = \omega(t)\delta_{x_2-\sigma t}\,\mathrm{d} x_1 \dt.
\end{equation}
In particular, the duality between this measure and a smooth function $\varphi$ is defined as
\begin{equation}\label{eq:duality}
    \langle m_\vr,\varphi\rangle = \int_0^\infty \int_\R \omega(t)\varphi(t,x_1,\sigma t)\, \mathrm{d} x_1 \dt.
\end{equation}

If we denote
\begin{equation}\label{eq:nutx2}
    (\vr,\vv) = \left\{
    \begin{aligned}
        &(\vr_-,\vv_{-}) \qquad \text{ for } x_2 < \sigma t \\
        &(\vr_+,\vv_{+}) \qquad \text{ for } x_2 > \sigma t,
    \end{aligned}
        \right.
\end{equation}
then the equations \eqref{eq:euler} are satisfied in the sense
\begin{align} \nonumber
    &\int_0^\infty \int_{\R^2} \vr\pat\varphi \dxdt + \langle m_\vr,\pat\varphi \rangle +  \int_0^\infty \int_{\R^2} \vr\vv\cdot\nabla_x\varphi \dxdt + \langle m_\vr,(\xi,\sigma)\cdot\nabla_x \varphi\rangle \\
    & = -\int_{\R^2} \vr^0(x)\varphi(0,x) \dx
\end{align}
for all $\varphi \in C^\infty_c([0,\infty)\times \R^2)$ and
\begin{align} \nonumber
    &\int_0^\infty \int_{\R^2} \vr\vv \cdot\pat\bfphi \dxdt + \langle m_\vr,(\xi,\sigma)\cdot\pat\bfphi \rangle \\
    &+ \int_0^\infty \int_{\R^2} \left(\vr\vv\otimes \vv - \frac{1}{\vr} \right):\nabla_x\bfphi \dxdt + \langle m_\vr,(\xi,\sigma)\otimes(\xi,\sigma):\nabla_x \bfphi\rangle \nonumber \\
    & = -\int_{\R^2} \vr^0(x)\vv^0(x)\cdot\bfphi(0,x) \dx
\end{align}
for all $\bfphi \in C^\infty_c([0,\infty)\times \R^2)$.

We close this section with examining the validity of the energy inequality \eqref{eq:energy} for the 1D solutions mentioned above. It is not difficult to observe that in the case of a solution consisting only of contact discontinuities, the energy inequality \eqref{eq:energy} (more precisely its weak formulation) holds as an equality. In the case of $\delta$-shock we have the following Lemma.
\begin{lemma}\label{l:deltaenergy}
Let $(\vr_\pm,\vv_\pm)$ are Riemann initial data such that \eqref{eq:deltacond00} holds. Assume that $\omega(t)$, $\sigma$ and $\xi$ are given by \eqref{eq:omega1}-\eqref{eq:xi1} in the case $\vr_- \neq \vr_+$ and by \eqref{eq:omega2}-\eqref{eq:xi2} in the case $\vr_- = \vr_+$. Then the couple $(\nu_{t,x},m_\vr)$ defining the $\delta$-shock solution given as in \eqref{eq:nutx}, \eqref{eq:murho} satisfies the following version of the energy inequality
\begin{align}
    \nonumber
    &\int_0^\infty \int_{\R^2} \left(\frac{1}{2\vr} + \vr\frac{|\vv|^2}{2}\right) \pat\varphi \dxdt + \langle m_\vr,\frac{\xi^2 + \sigma^2}{2}\pat\varphi \rangle \\
    &+ \int_0^\infty \int_{\R^2} \left(\vr\frac{|\vv|^2}{2} - \frac{1}{2\vr} \right)\vv\cdot\nabla_x\varphi \dxdt + \langle m_\vr,\frac{\xi^2+\sigma^2}{2}(\xi,\sigma) \cdot\nabla_x \varphi\rangle \nonumber \\
    & \geq -\int_{\R^2} \left(\frac{1}{2\vr^0(x)} + \vr^0(x)\frac{|\vv^0(x)|^2}{2}\right)\varphi(0,x) \dx \label{eq:enin}
\end{align}
for all $\varphi \in C^\infty_c([0,\infty)\times\R^2)$, $\varphi \geq 0$.
\end{lemma}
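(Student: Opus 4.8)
\emph{Proof strategy.} The idea is to split the space--time domain along the $\delta$-shock line $\Sigma=\{x_2=\sigma t\}$, integrate by parts on the two half-spaces $\Omega_-=\{x_2<\sigma t\}$, $\Omega_+=\{x_2>\sigma t\}$ where the state $(\vr,\vv)$ is constant, and show that the resulting surface terms together with the concentration terms combine into a single scalar inequality, which turns out to be governed by \eqref{eq:deltacond00}. Write $E=\frac{1}{2\vr}+\vr\frac{|\vv|^2}{2}$ and $\vc q=(q_1,q_2)=\big(\vr\frac{|\vv|^2}{2}-\frac{1}{2\vr}\big)\vv$ for the energy density and energy flux, and let $E^0$, $E_\pm$, $q_{\pm,2}$ be the corresponding constants determined by $(\vr^0,\vv^0)$ and $(\vr_\pm,\vv_\pm)$. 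On $\Omega_\pm$ one has $E\pat\varphi+\vc q\cdot\Grad\varphi=\mathrm{div}_{t,x}(E\varphi,q_1\varphi,q_2\varphi)$, so the divergence theorem rewrites $\int_{\Omega_\pm}(E\pat\varphi+\vc q\cdot\Grad\varphi)\dxdt$ as a $\{t=0\}$-boundary term — the two of which together produce $-\int_{\R^2}E^0\varphi(0,\cdot)\dx$ — plus a boundary term on $\Sigma$. Parametrising $\Sigma$ by $(t,x_1)\mapsto(t,x_1,\sigma t)$, with surface element $\sqrt{1+\sigma^2}\,\mathrm{d}x_1\dt$, and using that the outward normals of $\Omega_-$ and $\Omega_+$ along $\Sigma$ are $\frac{(-\sigma,0,1)}{\sqrt{1+\sigma^2}}$ and $\frac{(\sigma,0,-1)}{\sqrt{1+\sigma^2}}$ (so the $q_1$-contributions cancel), the two $\Sigma$-terms add up to $\int_0^\infty\!\int_\R\big(\sigma(E_+-E_-)-(q_{+,2}-q_{-,2})\big)\varphi(t,x_1,\sigma t)\,\mathrm{d}x_1\dt$.

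For the concentration terms, the duality \eqref{eq:duality} together with the chain-rule identity $\pat\varphi(t,x_1,\sigma t)+\sigma\pax\varphi(t,x_1,\sigma t)=\frac{\mathrm d}{\dt}\big[\varphi(t,x_1,\sigma t)\big]$ turns their sum into
\[
\frac{\xi^2+\sigma^2}{2}\int_0^\infty\!\int_\R \omega(t)\Big(\tfrac{\mathrm d}{\dt}\big[\varphi(t,x_1,\sigma t)\big]+\xi\,\partial_{x_1}\varphi(t,x_1,\sigma t)\Big)\,\mathrm{d}x_1\dt .
\]
The $\partial_{x_1}$-term vanishes after integration in $x_1$; since $\omega(t)=\omega_1 t$ is linear (with $\omega_1$ the constant right-hand side of \eqref{eq:deltaeq1}, given explicitly by \eqref{eq:omega1} or \eqref{eq:omega2}), integrating by parts in $t$ — the $t=0$ endpoint suppressed by the factor $t$ — leaves $-\omega_1\frac{\xi^2+\sigma^2}{2}\int_0^\infty\!\int_\R\varphi(t,x_1,\sigma t)\,\mathrm{d}x_1\dt$. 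Adding everything, the left-hand side of \eqref{eq:enin} equals $-\int_{\R^2}E^0\varphi(0,\cdot)\dx+\int_0^\infty\!\int_\R L\,\varphi(t,x_1,\sigma t)\,\mathrm{d}x_1\dt$ with the constant
\[
L:=\sigma(E_+-E_-)-(q_{+,2}-q_{-,2})-\omega_1\frac{\xi^2+\sigma^2}{2},
\]
so, since $\varphi\geq0$ is otherwise arbitrary, \eqref{eq:enin} is equivalent to the single scalar inequality $L\geq0$.

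It remains to prove $L\geq0$, which is the algebraic heart of the matter. Put $a_+=\vr_+(\sigma-v_{+2})$ and $a_-=\vr_-(v_{-2}-\sigma)$; both are positive because $v_{+2}<\sigma<v_{-2}$. The generalized Rankine--Hugoniot conditions \eqref{eq:deltaeq1}, \eqref{eq:deltaeq15} read $a_++a_-=\omega_1$ and $a_+v_{+1}+a_-v_{-1}=\omega_1\xi$, so $\xi$ is the $(a_+,a_-)$-weighted average of $v_{-1},v_{+1}$ and $V_1:=a_+v_{+1}^2+a_-v_{-1}^2-\omega_1\xi^2\geq0$ by Cauchy--Schwarz. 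Eliminating $\omega_1\sigma$ between $\sigma\cdot$\eqref{eq:deltaeq1} and \eqref{eq:deltaeq2} yields the key identity
\[
\vr_+(\sigma-v_{+2})^2-\frac{1}{\vr_+}=\vr_-(\sigma-v_{-2})^2-\frac{1}{\vr_-},\qquad\text{that is,}\qquad c:=\frac{a_+^2-1}{\vr_+}=\frac{a_-^2-1}{\vr_-}.
\]
Expanding $\sigma E_\pm-q_{\pm,2}$ and $\omega_1\sigma$ in terms of $a_\pm$, the $\vr_\pm^{-1}$- and $\sigma$-terms cancel, the part carrying $v_{\pm1}$ is recognised as $\tfrac12 V_1$, and the remainder collapses, via the key identity, to $\tfrac{c}{2}(v_{-2}-v_{+2})$; hence $L=\tfrac12 V_1+\tfrac{c}{2}(v_{-2}-v_{+2})$. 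Since $V_1\geq0$ and $v_{-2}-v_{+2}>0$, it suffices to check $c\geq0$, i.e. $a_+\geq1$. Under \eqref{eq:deltacond00} one has $v_{-2}-v_{+2}\geq\vr_-^{-1}+\vr_+^{-1}$, so \eqref{eq:omega1}/\eqref{eq:omega2} give $\omega_1^2\geq\vr_+\vr_-\big((\vr_-^{-1}+\vr_+^{-1})^2-(\vr_-^{-1}-\vr_+^{-1})^2\big)=4$, whence $a_++a_-=\omega_1\geq2$; if $a_+<1$ then $c<0$, forcing $a_-<1$ and contradicting $a_++a_-\geq2$. Therefore $a_+\geq1$, $c\geq0$, and $L\geq0$.

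The main obstacle is this final part: the bookkeeping that reduces \eqref{eq:enin} to the scalar $L\geq0$, and above all spotting the substitution $a_\pm$ and the identity $c=\tfrac{a_+^2-1}{\vr_+}=\tfrac{a_-^2-1}{\vr_-}$, which make transparent that the validity of the energy inequality across the $\delta$-shock is governed precisely by the $\delta$-shock condition \eqref{eq:deltacond00}.
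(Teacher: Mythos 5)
Your proof is correct, but it is organised quite differently from the paper's. The paper first invokes the Galilean transformation (its Lemma 2.3) to normalise $\sigma=\xi=0$; this makes the concentration terms $\langle m_\vr,\cdot\rangle$ vanish identically, and the lemma then reduces to the jump inequality \eqref{eq:enin21}, which is verified by ``working backwards'' through a chain of equivalent inequalities ending at $v_{+2}\leq-1/\vr_+$ and $v_{-2}\geq 1/\vr_-$. You instead keep $\sigma$ and $\xi$ general, compute the contribution of the concentration measure explicitly (using the linearity $\omega(t)=\omega_1 t$ and an integration by parts in $t$), and reduce everything to the single scalar inequality $L\geq 0$, which you then settle via the closed-form identity $L=\tfrac12 V_1+\tfrac{c}{2}(v_{-2}-v_{+2})$. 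The underlying algebra is in fact the same: your key identity $c=\tfrac{a_+^2-1}{\vr_+}=\tfrac{a_-^2-1}{\vr_-}$ is precisely the paper's factored form of \eqref{eq:enin1}, your conclusion $a_\pm\geq 1$ is the paper's \eqref{eq:finaltouch} (though you derive it from $\omega_1=a_++a_-\geq 2$ rather than from the sign/sum argument), and your Jensen bound $V_1\geq 0$ replaces the paper's treatment of the $v_{\pm1}$-terms via \eqref{eq:eninprv}. What your route buys is that the role of the concentration terms is made explicit rather than transformed away, and the inequality is displayed as a manifestly nonnegative sum; what the paper's route buys is shorter bookkeeping, since after the Galilean reduction only the two half-plane integrals survive. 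One cosmetic remark: you assert that \eqref{eq:enin} is \emph{equivalent} to $L\geq 0$, but for the lemma only the implication $L\geq 0\Rightarrow\eqref{eq:enin}$ is needed (the converse requires localising test functions near the shock line); this does not affect the validity of your argument.
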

In order to prove Lemma \ref{l:deltaenergy} it is useful to introduce the Galilean transformation for $\delta$-shock solutions.

\begin{lemma}\label{lem:galileo}
Let $(\vr, \vv)$ and $m_\vr$ be a $\delta$-shock solution to \eqref{eq:euler}, \eqref{eq:Riem} as defined in \eqref{eq:nutx}, \eqref{eq:murho} and \eqref{eq:nutx2}. Then for any $\vcc = (c_1,c_2) \in \R^2$
\begin{equation}\label{eq:tildaveci}
\begin{split}
    \tilde{\vr}(t,x) &:= \vr(t,x-\vcc t), \\ 
    \tilde{\vv}(t,x) &:= \vv(t,x-\vcc t) + \vcc, \\
    \tilde{\omega}(t) &:= \omega(t), \\
    \tilde{\sigma} &:= \sigma + c_2, \\
    \tilde{\xi} &:= \xi + c_1, \\
    \widetilde{m_\vr} &:= \tilde{\omega}(t)\delta_{x_2-\tilde{\sigma} t}\,\mathrm{d} x_1 \dt
\end{split}
\end{equation}
is a $\delta$-shock solution to \eqref{eq:euler} with the initial data $(\tilde{\vr}^0(x),\tilde{\vv}^0(x)) = (\vr^0(x),\vv^0(x) + \vcc)$.
\end{lemma}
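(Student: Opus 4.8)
\emph{Proof proposal.} The guiding principle is that the isentropic Euler system \eqref{eq:euler}, and with it the weak/measure--valued formulation defining a $\delta$-shock solution, is invariant under the Galilean boost $(t,x)\mapsto(t,x+\vcc t)$, $\vv\mapsto\vv+\vcc$. The plan is therefore to reduce each defining identity for $(\tilde{\vr},\tilde{\vv},\widetilde{m_\vr})$ tested against an arbitrary $\varphi\in C^\infty_c([0,\infty)\times\R^2)$ to the corresponding identity for $(\vr,\vv)$ and $m_\vr$ tested against the boosted test function $\psi(t,x):=\varphi(t,x+\vcc t)$, which again lies in $C^\infty_c([0,\infty)\times\R^2)$. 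Two elementary facts drive the argument: first, $\psi(0,\cdot)=\varphi(0,\cdot)$ and $\tilde{\vr}^0=\vr^0$, so the initial--data terms will match automatically; second, the chain rule gives $(\pat\varphi)(t,x+\vcc t)=\pat\psi(t,x)-\vcc\cdot\Grad\psi(t,x)$ and $(\Grad\varphi)(t,x+\vcc t)=\Grad\psi(t,x)$, and this single correction $-\vcc\cdot\Grad\psi$ is exactly what absorbs the boost of the velocity. As a preliminary consistency check I would verify that $\tilde{\omega}=\omega$, $\tilde{\sigma}=\sigma+c_2$, $\tilde{\xi}=\xi+c_1$ are indeed the quantities produced by the generalized Rankine--Hugoniot relations \eqref{eq:deltaeq1}--\eqref{eq:deltaeq2}, equivalently \eqref{eq:omega1}--\eqref{eq:xi2}, when $\vv_\pm$ is replaced by $\vv_\pm+\vcc$: this is immediate because $v_{+2}-v_{-2}$, $v_{+1}-v_{-1}$ and $\vr_\pm$ are unchanged while $\vr_+v_{+2}-\vr_-v_{-2}$ and $\vr_+v_{+1}-\vr_-v_{-1}$ change by $c_2(\vr_+-\vr_-)$ and $c_1(\vr_+-\vr_-)$, and the conventions $\omega(0)=0$ and $1/\vr=0$ on the shock line are obviously preserved.

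For the mass equation I would insert $\tilde{\vr}(t,x)=\vr(t,x-\vcc t)$ and $\tilde{\vv}(t,x)=\vv(t,x-\vcc t)+\vcc$ into the two volume integrals and change variables $x\mapsto x+\vcc t$; using the chain rule above they become $\int_0^\infty\int_{\R^2}\vr(\pat\psi-\vcc\cdot\Grad\psi)\dxdt+\int_0^\infty\int_{\R^2}\vr(\vv+\vcc)\cdot\Grad\psi\dxdt$, in which the two terms proportional to $\vr\,\vcc\cdot\Grad\psi$ cancel. For the concentration terms I would use that the support line $\{x_2=\tilde{\sigma}t\}$ of $\widetilde{m_\vr}$ is the boost of $\{x_2=\sigma t\}$ and that $\tilde{\omega}=\omega$, so the same change of variables $x_1\mapsto x_1+c_1t$ inside the duality \eqref{eq:duality} turns $\langle\widetilde{m_\vr},\pat\varphi\rangle+\langle\widetilde{m_\vr},(\tilde{\xi},\tilde{\sigma})\cdot\Grad\varphi\rangle$ into $\langle m_\vr,\pat\psi-\vcc\cdot\Grad\psi\rangle+\langle m_\vr,((\xi,\sigma)+\vcc)\cdot\Grad\psi\rangle$, where again the $\vcc$--parts cancel. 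Thus the mass identity for the tilde solution tested against $\varphi$ collapses to the mass identity for $(\vr,\vv)$, $m_\vr$ tested against $\psi$, which holds because $(\vr,\vv)$, $m_\vr$ is a $\delta$-shock solution.

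For the momentum equation I use the same substitution, but now with an extra bookkeeping step which I expect to be the only genuinely nontrivial point. Here the boosted momentum is $\tilde{\vr}\tilde{\vv}(t,x)=(\vr\vv)(t,x-\vcc t)+\vcc\,\vr(t,x-\vcc t)$, the flux $\tilde{\vr}\tilde{\vv}\otimes\tilde{\vv}-\tfrac{1}{\tilde{\vr}}\id$ expands, after the change of variables, into $\vr\vv\otimes\vv-\tfrac1\vr\id+\vr\vv\otimes\vcc+\vr\vcc\otimes\vv+\vr\vcc\otimes\vcc$, the concentration flux $(\tilde{\xi},\tilde{\sigma})\otimes(\tilde{\xi},\tilde{\sigma})$ expands analogously around $(\xi,\sigma)$, and $\tilde{\vr}^0\tilde{\vv}^0=\vr^0\vv^0+\vcc\,\vr^0$. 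The plan is then to collect, component by component, every term carrying an explicit factor $\vcc$ and to check that they assemble into $\vcc$ times the \emph{mass} identity for $(\vr,\vv)$, $m_\vr$ tested against $\boldsymbol{\psi}(t,x):=\bfphi(t,x+\vcc t)$: the cross term $\vr\vv\otimes\vcc$ in the flux cancels the chain--rule correction $-\vcc\cdot\Grad\boldsymbol{\psi}$ acting on the unshifted momentum $\vr\vv$, the quadratic term $\vr\vcc\otimes\vcc$ cancels the same correction acting on the extra momentum $\vcc\,\vr$, while the extra momentum $\vcc\,\vr$ (with uncorrected time derivative), the cross term $\vr\vcc\otimes\vv$, and the piece $\vcc\,\vr^0$ of the initial data reproduce precisely $\vcc$ times the mass identity; the concentration terms behave in the same way. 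Since $(\vr,\vv)$, $m_\vr$ already satisfies the weak mass equation, this whole bundle of $\vcc$--terms vanishes and what remains is exactly the momentum identity for $(\vr,\vv)$, $m_\vr$ tested against $\boldsymbol{\psi}$. Hence $(\tilde{\vr},\tilde{\vv},\widetilde{m_\vr})$ is a $\delta$-shock solution with initial data $(\vr^0,\vv^0+\vcc)$. The hard part, as flagged, is precisely this matching of the $\vcc$--dependent momentum terms against the mass equation; everything else is a routine linear change of variables.
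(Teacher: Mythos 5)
Your proposal is correct, but it runs along a different track than the paper's own proof. The paper treats the bulk (piecewise--constant) part of the Galilean invariance as standard and concentrates entirely on the singular part: it verifies directly that $\tilde{\omega}$, $\tilde{\sigma}$, $\tilde{\xi}$ satisfy the three generalized Rankine--Hugoniot ODEs \eqref{eq:deltaeq1}--\eqref{eq:deltaeq2} written for the boosted states, the whole computation resting on the elementary bracket identity $[f(g+c)]=[fg]+c[f]$. You instead work with the distributional formulation itself, pulling every term back through the boosted test function $\psi(t,x)=\varphi(t,x+\vcc t)$ and checking that the chain--rule correction $-\vcc\cdot\Grad\psi$ is absorbed by the shifted velocity in the mass equation, while in the momentum equation the leftover $\vcc$--terms reassemble into $\vcc$ times the mass identity tested against $\boldsymbol{\psi}$ (I checked this bookkeeping with the convention $(\vc{a}\otimes\vc{b}):\Grad\boldsymbol{\psi}=a_ib_j\partial_j\psi_i$ and it is right, for the concentration pairings as well via the substitution $x_1\mapsto x_1+c_1t$). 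What each approach buys: the paper's verification is shorter and stays at the level of the jump algebra, but it implicitly relies on the (unproved but standard) equivalence between the weak formulation and the generalized Rankine--Hugoniot conditions for this piecewise--constant--plus--Dirac ansatz; your argument is longer but self--contained, since it never invokes that equivalence and proves the weak identities from scratch. Your ``preliminary consistency check'' that the explicit formulas \eqref{eq:omega1}--\eqref{eq:xi2} transform as claimed is essentially the content of the paper's entire proof, recast through the closed--form solutions rather than the ODEs; given your direct verification of the weak formulation it is strictly speaking redundant, but it is the part that matches the stated formulas \eqref{eq:tildaveci}.
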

\begin{proof}
The Galilean transformation is a standard tool in the framework of weak solutions for Euler equations. Here we simply need to verify the Rankine-Hugoniot conditions \eqref{eq:deltaeq1}--\eqref{eq:deltaeq2} with $\tilde{\omega}$ and $\tilde{\sigma}$ defined in \eqref{eq:tildaveci}. We recall that for two arbitrary functions $f$ and $g$ it holds $[f(g+c)] = [fg] + c[f]$ for any constant $c\in \mathbb R$.
The first condition \eqref{eq:deltaeq1} rewrites as
\begin{equation*}
\frac{{\rm d} \tilde \omega(t)}{{\rm d}t} = \frac{{\rm d} \omega(t)}{{\rm d}t} = \sigma [\vr] - [\vr v_2] = \tilde \sigma[\tilde\vr] - c_2[\tilde\vr] - [\tilde\vr \tilde v_2] + c_2[\tilde \vr] = \tilde \sigma[\tilde \vr] - [\tilde \vr\tilde v_2].
\end{equation*}
This relation is then used in the derivation of the second condition \eqref{eq:deltaeq15} since
\begin{align*}
&\frac{{\rm d}\tilde \omega(t) \tilde \xi}{{\rm d}t} = \frac{{\rm d} \omega (t)  \xi}{{\rm d}t} + c_1 \frac{{\rm d}\tilde \omega(t)}{{\rm d}t} = \sigma[\vr v_1] - [\vr v_1 v_2] + c_1\tilde \sigma[\tilde \vr] - c_1[\tilde \vr\tilde v_2] \\ 
& \quad = \tilde \sigma[\tilde \vr \tilde v_1] - c_1\tilde \sigma[\tilde \vr] -c_2[\tilde\vr\tilde v_1] + c_1 c_2[\tilde \vr] - [\tilde \vr \tilde v_1 \tilde v_2] + c_1 [\tilde \vr \tilde v_2] + c_2 [\tilde \vr \tilde v_1] - c_1 c_2[\tilde \vr] + c_1\tilde \sigma[\tilde \vr] - c_1[\tilde \vr\tilde v_2] \\
&\quad = \tilde \sigma [\tilde \vr \tilde v_1] - [\tilde \vr \tilde v_1 \tilde v_2]
\end{align*}
and the third condition \eqref{eq:deltaeq2} since
\begin{align*}
&\frac{{\rm d}\tilde \omega(t) \tilde \sigma}{{\rm d}t} = \frac{{\rm d} \omega (t)  \sigma}{{\rm d}t} + c_2 \frac{{\rm d}\tilde \omega(t)}{{\rm d}t} = \sigma[\vr v_2] - [\vr v_2^2] + [\vr^{-1}] + c_2\tilde \sigma[\tilde \vr] - c_2[\tilde \vr\tilde v_2] \\ 
& \quad = \tilde \sigma[\tilde \vr \tilde v_2] - c_2\tilde \sigma[\tilde \vr] -c_2[\tilde\vr\tilde v_2] + c_2^2[\tilde \vr] - [\tilde \vr \tilde v_2^2] + 2c_2 [\tilde \vr \tilde v_2] - c_2^2[\tilde \vr] + [\vr^{-1}] + c_2\tilde \sigma[\tilde \vr] - c_2[\tilde \vr\tilde v_2] \\
&\quad = \tilde \sigma [\tilde \vr \tilde v_2] - [\tilde \vr \tilde v_2^2 - \tilde \vr^{-1}].
\end{align*}
\end{proof}

Now we can prove Lemma \ref{l:deltaenergy}.

\begin{proof}
Thanks to  Lemma \ref{lem:galileo} we may assume that $\sigma = 0$ and $\xi = 0$. This together with \eqref{eq:deltaeq15}-\eqref{eq:deltaeq2} yield the relations
\begin{align}
\vr_+v_{+1}v_{+2} - \vr_-v_{-1}v_{-2} &= 0, \label{eq:eninprv}\\ 
\vr_+v_{+2}^2 - \vr_-v_{-2}^2 - \frac{1}{\vr_+} + \frac{1}{\vr_-} &= 0.\label{eq:enin1}
\end{align}

Further, since $\sigma = 0$, we can split the first integral on the left of \eqref{eq:enin} and calculate
\begin{align}
&\int_0^\infty \int_{\R^2} \left(\frac{1}{2\vr} + \vr\frac{|\vv|^2}{2}\right) \pat\varphi \dxdt \nonumber \\ 
&\quad = \int_0^\infty \int_{x_2 < 0}  \left(\frac{1}{2\vr} + \vr\frac{|\vv|^2}{2}\right) \pat\varphi \dxdt + \int_0^\infty \int_{x_2 > 0} \left(\frac{1}{2\vr} + \vr\frac{|\vv|^2}{2}\right) \pat\varphi \dxdt \nonumber \\
&\quad = - \int_{x_2 < 0}  \left(\frac{1}{2\vr^0} + \vr^0\frac{|\vv^0|^2}{2}\right)\varphi(0,x) \dx -  \int_{x_2 > 0} \left(\frac{1}{2\vr^0} + \vr^0\frac{|\vv^0|^2}{2}\right)\varphi(0,x) \dx \nonumber \\
&\quad = - \int_{\R^2}  \left(\frac{1}{2\vr^0} + \vr^0\frac{|\vv^0|^2}{2}\right) \dx.
\end{align}
Moreover, since $\xi = \sigma = 0$, the duality pairings with the measure $m_\vr$ in \eqref{eq:enin} are trivially zero. So to prove  \eqref{eq:enin} it is enough to show that 
\begin{equation}\label{eq:enin00}
    \int_0^\infty \int_{\R^2} \left(\vr\frac{|\vv|^2}{2} - \frac{1}{2\vr} \right)\vv\cdot\nabla_x\varphi \dxdt = \int_0^\infty \int_{\R^2} \left(\vr\frac{v_1^2 + v_2^2}{2} - \frac{1}{2\vr} \right)(v_1\,\partial_{x_1}\varphi + v_2\,\pax\varphi) \dxdt \geq 0.
\end{equation}
To do so, we split once again the integral over $\R^2$ into the negative and the positive $x_2$-half-planes and observe that 
\begin{equation}
\int_0^\infty \int_{\pm x_2 > 0} \int_\R  \left(\vr\frac{v_1^2 + v_2^2}{2} - \frac{1}{2\vr} \right) v_1 \partial_{x_1} \varphi\, \mathrm{d}x_1\, \dx_2\,\dt = 0  . 
\end{equation}
Therefore, to get \eqref{eq:enin00} we must show that
\begin{equation}
\vr_+v_{+2}^3 - \vr_-v_{-2}^3 - \frac{v_{+2}}{\vr_+} + \frac{v_{-2}}{\vr_-} + \vr_+v_{+1}^2v_{+2} - \vr_- v_{-1}^2 v_{-2}\leq 0.\label{eq:enin21}
\end{equation}
First, we observe that $\vr_+v_{+1}^2v_{+2} - \vr_- v_{-1}^2 v_{-2}\leq 0$. Indeed, using \eqref{eq:eninprv} we get that the expression on the left-hand side is equal to $(v_{+1} - v_{-1})\vr_+v_{+1}v_{+2}$ and we already know that $\vr_+ > 0$, the terms $v_{+1}$ and $v_{+1}-v_{-1}$ have the same sign and $v_{+2} < \sigma = 0$.

Second, we prove the inequality
\begin{equation}
\vr_+v_{+2}^3 - \vr_-v_{-2}^3 - \frac{v_{+2}}{\vr_+} + \frac{v_{-2}}{\vr_-} \leq 0\label{eq:enin2}
\end{equation}
by working backwards. 
Since  $2\eqref{eq:enin2} - v_{+2}\eqref{eq:enin1} - v_{-2}\eqref{eq:enin1}$ yields
\begin{equation*}
\vr_+v_{+2}^3 - \vr_-v_{-2}^3 - \frac{v_{+2}}{\vr_+} + \frac{v_{-2}}{\vr_-} + \vr_- v_{-2}^2v_{+2} - \vr_+ v_{+2}^2 v_{-2} + \frac{v_{-2}}{\vr_+} - \frac{v_{+2}}{\vr_-} \leq 0,
\end{equation*}
we get that
\begin{equation}\label{eq:enin3}
(v_{-2} - v_{+2}) \left(\frac 1{\vr_{+}} - \vr_{+2} v_{+2}^2\right) + (v_{+2}-v_{-2})\left(\vr_-v_{-2}^2 - \frac 1{\vr_-}\right)\leq 0
\end{equation}
is equivalent to \eqref{eq:enin2}. Furthermore, as $v_{-2}- v_{+2} > 0$ we deduce that \eqref{eq:enin3} is equivalent to
\begin{equation}\label{eq:enin4}
\vr_+ v_{+2}^2 + \vr_- v_{-2}^2 - \frac 1{\vr_-} - \frac1{\vr_+} \geq 0.
\end{equation}
Next, summing $\eqref{eq:enin4} + \eqref{eq:enin1}$,  subtracting $\eqref{eq:enin4} - \eqref{eq:enin1}$, respectively, we obtain that
$$
v_{+2}^2 \geq \frac1{\vr_+^2}, \qquad \mbox{ resp. } v_{-2}^2 \geq \frac 1{\vr_-^2}
$$
has to be satisfied. Since we know that $v_{+2} < \sigma = 0 < v_{-2}$ the above relations are equivalent to
\begin{equation}
    v_{+2} \leq - \frac{1}{\vr_+}, \qquad \mbox{ resp. } v_{-2} \geq \frac 1{\vr_-}. \label{eq:finaltouch}
\end{equation}
Finally, we use \eqref{eq:enin1} one more time in the form
\begin{equation}
    \vr_+\left(-v_{+2}+\frac{1}{\vr_+}\right)\left(-v_{+2}-\frac{1}{\vr_+}\right) = \vr_-\left(v_{-2}+\frac{1}{\vr_-}\right)\left(v_{-2}-\frac{1}{\vr_-}\right).
\end{equation}
The first two terms in the products on both sides are clearly positive and hence $-v_{+2}-\frac{1}{\vr_+}$ and $v_{-2}-\frac{1}{\vr_-}$  have the same sign or are both equal to $0$. Since the sum of these terms is non--negative, see \eqref{eq:deltacond00}, both terms are non--negative and \eqref{eq:finaltouch} holds true.
This concludes the proof.

\end{proof}

\section{Subsolutions}\label{s:Subs}

The proofs of Theorems \ref{t:main} and \ref{t:main2} are based on the notion of admissible fan subsolutions introduced originally in \cite{ChDLKr} and on the construction of infinitely many solutions related to a single subsolution in the spirit of \cite{DLSz1}, \cite{DLSz2}. Therefore we first introduce the necessary definitions.

\begin{definition}[Fan partition]\label{d:fan}
A {\em fan partition} of $\R^2\times (0, \infty)$ consists of open sets $P_-, P_1, P_2, P_+$
of the following form 
\begin{align}
 P_- &= \{(t,x): t>0 \quad \mbox{and} \quad x_2 < \nu_- t\}\\
 P_1 &= \{(t,x): t>0 \quad \mbox{and} \quad \nu_- t < x_2 < \nu_0 t\}\\
 P_2 &= \{(t,x): t>0 \quad \mbox{and} \quad \nu_0 t < x_2 < \nu_+ t\}\\
 P_+ &= \{(t,x): t>0 \quad \mbox{and} \quad x_2 > \nu_+ t\},
\end{align}
where $\nu_- < \nu_0 < \nu_+$ is an arbitrary trio of real numbers.
\end{definition}

We denote by $\Sym_0^{2\times2}$ the set of all symmetric $2\times2$ matrices with zero trace, by $\id$ the $2\times2$ identity matrix and by $\bm{1}_P$ the indicator function of a set $P$.

\begin{definition}[Fan subsolution] \label{d:subs}
A {\em fan subsolution} to the compressible isentropic Euler system \eqref{eq:euler} with the
initial data \eqref{eq:Riem} is a triple 
$(\overline{\vr}, \overline{\vv}, \overline{\tu}): \R^2\times 
(0,\infty) \rightarrow (\R^+, \R^2, \Sym_0^{2\times2})$ of piece--wise constant functions satisfying
the following requirements.
\begin{itemize}
\item[(i)] There is a fan partition $P_-, P_1, P_2, P_+$ of $\R^2\times (0, \infty)$ such that
\[
(\overline{\vr}, \overline{\vv}, \overline{\tu})=  
(\vr_-, \vv_-, \tu_-) \bm{1}_{P_-}
+ (\vr_1, \vv_1, \tu_1) \bm{1}_{P_1}
+ (\vr_2, \vv_2, \tu_2) \bm{1}_{P_2}
+ (\vr_+, \vv_+, \tu_+) \bm{1}_{P_+},
\]
where $\vr_i, \vv_i, \tu_i$ ($i = 1,2$) are constants with $\vr_i >0$ and $\tu_\pm =
\vv_\pm\otimes \vv_\pm - \textstyle{\frac{1}{2}} |\vv_\pm|^2 \id$;
\item[(ii)] There exists a positive constants $C_i$ ($i = 1,2$) such that
\begin{equation} \label{eq:subsolution 2}
\vv_i\otimes \vv_i - \tu_i < \frac{C_i}{2} \id\, ;
\end{equation}
\item[(iii)] The triple $(\overline{\vr}, \overline{\vv}, \overline{\tu})$ solves the following system in the
sense of distributions:
\begin{align}
&\partial_t \overline{\vr} + {\rm div}_x (\overline{\vr} \, \overline{\vv}) \;=\; 0,\label{eq:continuity}\\
&\partial_t (\overline{\vr} \, \overline{\vv})+{\rm div}_x \left(\overline{\vr} \, \overline{\tu} 
\right) + \nabla_x \left( p(\overline{\vr})+\frac{1}{2}\left( \overline{\vr} |\overline{\vv}|^2 \bm{1}_{P_+\cup P_-} + \sum_{i=1}^2 C_i \vr_i
\bm{1}_{P_i}\right)\right)= 0.\label{eq:momentum}
\end{align}
\end{itemize}
\end{definition}

\begin{definition}[Admissible fan subsolution]\label{d:admiss}
 A fan subsolution $(\overline{\vr}, \overline{\vv}, \overline{\tu})$ is said to be {\em admissible}
if it satisfies the following inequality in the sense of distributions
\begin{align} 
&\pat \left(\overline{\vr} \varepsilon(\overline{\vr})\right)+{\rm div}_x
\left[\left(\overline{\vr}\varepsilon(\overline{\vr})+p(\overline{\vr})\right) \overline{\vv}\right]
 + \pat \left( \overline{\vr}\, \frac{|\overline{\vv}|^2}{2}\, \bm{1}_{P_+\cup P_-} \right)
+ {\rm div}_x \left(\overline{\vr}\, \frac{|\overline{\vv}|^2}{2}\, \overline{\vv}\, \bm{1}_{P_+\cup P_-}\right)\nonumber\\
&\qquad\qquad+ \left[\pat\left(\sum_{i=1}^2\vr_i \, \frac{C_i}{2} \, \bm{1}_{P_i}\right) 
+ {\rm div}_x\left(\sum_{i=1}^2\vr_i \, \overline{\vv} \, \frac{C_i}{2}  \, \bm{1}_{P_i}\right)\right]
\;\leq\; 0\, .\label{eq:admissible subsolution}
\end{align}
\end{definition}

A key ingredient in the proofs of main theorems of this paper is the following proposition proved in \cite{ChDLKr}.

\begin{proposition}\label{p:subs}
Let $p(\vr)$ be any $C^1$ function and $(\vr_\pm, \vv_\pm)$ be such that there exists at least one
admissible fan subsolution $(\overline{\vr}, \overline{\vv}, \overline{\tu})$ of \eqref{eq:euler}
with initial data \eqref{eq:Riem}. Then there are infinitely 
many bounded admissible solutions $(\vr, \vv)$ to \eqref{eq:euler}, \eqref{eq:Riem} such that 
$\vr=\overline{\vr}$ and $\abs{\vv}^2\bm{1}_{P_i} = C_i$, $i=1,2$.
\end{proposition}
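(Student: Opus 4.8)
The plan is to follow the convex-integration scheme of De Lellis and Sz\'ekelyhidi \cite{DLSz1,DLSz2}, reducing the problem inside the middle sectors $P_1,P_2$ to an incompressible-Euler-type differential inclusion and then invoking a Baire category argument. First I would freeze the density, $\vr:=\overline{\vr}$, so that on $P_\pm$ the candidate solution is the constant state $(\vr_\pm,\vv_\pm)$ and on each $P_i$ the density equals the constant $\vr_i$. With $\vr$ frozen, the continuity equation \eqref{eq:continuity} on $P_i$ becomes $\Div\vv=0$ and, since $p(\vr_i)$ is constant, the momentum equation \eqref{eq:momentum} becomes $\pat\vv+\Div(\vv\otimes\vv)=0$ modulo a gradient; writing $\tu:=\vv\otimes\vv-\tfrac12|\vv|^2\id$, producing a genuine bounded weak solution amounts to finding, on the open set $P_1\cup P_2$, a field $(\vv,\tu)$ solving the \emph{linear} system obtained from \eqref{eq:continuity}--\eqref{eq:momentum} (with $\vr$ fixed and the relaxed pressure term treated as a free scalar unknown) together with the \emph{pointwise} constraint $\vv\otimes\vv-\tu=\tfrac{C_i}{2}\id$ a.e.\ on $P_i$, which is equivalent to $|\vv|^2=C_i$ and $\tu=\vv\otimes\vv-\tfrac12|\vv|^2\id$.

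Second I would set up the functional framework. Let $X_0$ be the set of triples $(\overline{\vr},\vv,\tu)$ that coincide with the given admissible fan subsolution outside a compact subset of the open region $P_1\cup P_2$, solve the linear system distributionally (so in particular the Rankine--Hugoniot-type interface conditions at $\nu_-,\nu_0,\nu_+$ and the initial data \eqref{eq:Riem} are untouched), and satisfy the \emph{strict} inequality $\vv\otimes\vv-\tu<\tfrac{C_i}{2}\id$ on $P_i$; by hypothesis the prescribed admissible fan subsolution lies in $X_0$, so $X_0\neq\emptyset$. Equip $X_0$ with a metric inducing a topology in which bounded sequences of velocity fields converge weakly-$*$ in $L^\infty$, and let $X$ be the closure, a complete metric space. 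For a fixed compact $K\subset P_1\cup P_2$ the functional $(\vv,\tu)\mapsto\sum_i\int_{P_i\cap K}(C_i-|\vv|^2)$ is nonnegative on $X$ and vanishes precisely when the pointwise constraint holds, i.e.\ precisely when $(\vr,\vv)$ is a weak solution of \eqref{eq:euler}, \eqref{eq:Riem}.

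Third comes the heart of the matter, the perturbation property: if $(\vv,\tu)\in X_0$ and $\int_{P_i\cap K}(C_i-|\vv|^2)\geq\alpha>0$ for some $i$, then there is $(\vv',\tu')\in X_0$ with the same boundary data and energy gap reduced by a fixed amount $\beta(\alpha)>0$. This is proved by the standard localized plane-wave construction: one identifies the wave cone $\Lambda$ of the linear system, verifies that the state $(\vv,\tu)$ lies in the \emph{interior} of the $\Lambda$-convex hull of the pointwise constraint set (this is exactly where the strict inequality (ii) and the positivity of $C_i$ enter), adds an oscillatory correction supported in $P_i\cap K$ assembled from $\Lambda$-directions via a Carath\'eodory-type geometric decomposition, and uses a potential/localization argument to keep the linear equations satisfied while keeping the amplitude small enough to preserve the strict inequality. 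This step is the main obstacle: it is the genuinely nontrivial convex-integration input, carried out in \cite{DLSz1,DLSz2} and adapted to the fan geometry in \cite{ChDLKr}.

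Finally I would conclude by Baire category. The functional $(\vv,\tu)\mapsto\sum_i\int_{P_i\cap K}|\vv|^2$ extends to a Baire-one (pointwise limit of continuous) function on $X$, so its set of points of continuity is residual; at any such point the perturbation property forces the energy gap over every compact $K$ to vanish, hence $|\vv|^2=C_i$ a.e.\ on $P_i$ and $(\vr,\vv)$ is a bounded weak solution of \eqref{eq:euler}, \eqref{eq:Riem} with $\vr=\overline{\vr}$ and $|\vv|^2\bm{1}_{P_i}=C_i$. Since this holds for a residual, hence nonempty, set, and since one may start from perturbed fan subsolutions to produce distinct such residual sets, infinitely many (in fact uncountably many) distinct solutions arise. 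Admissibility is inherited at no extra cost: because $\vr=\overline{\vr}$ and $|\vv|^2=C_i$ on $P_i$, the total energy $\vr\ep(\vr)+\vr|\vv|^2/2$ of the solution equals the relaxed energy density built into \eqref{eq:admissible subsolution}, so the distributional inequality \eqref{eq:admissible subsolution} satisfied by the admissible fan subsolution is precisely the entropy inequality \eqref{eq:energy} for $(\vr,\vv)$.
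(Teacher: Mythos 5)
Your proposal is correct and follows essentially the same route as the paper, which simply quotes Proposition \ref{p:subs} from \cite{ChDLKr} and reduces it to the localized convex-integration statement of Lemma \ref{l:ci} applied on each sector $P_i$; your sketch just unpacks the Baire-category/plane-wave machinery behind that lemma and the same admissibility check (the flux correction $\vr_i\frac{C_i}{2}\underline{\vv}$ being divergence-free). No substantive difference in method.
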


The infinitely many bounded admissible weak solutions $(\vr,\vv)$ are constructed by adding, in a non--unique way, solutions to  the linearized pressureless incompressible Euler equations supported in sets $P_1$ and $P_2$ to the single subsolution. The procedure is described in the following lemma, which is a key building block of the proof of Proposition \ref{p:subs}, cf. \cite[Lemma 3.7]{ChDLKr}.

\begin{lemma}\label{l:ci}
Let $(\tilde{\vv}, \tilde{\tu})\in \R^2\times \Sym_0^{2\times 2}$ and $C_0>0$ be such that $\tilde{\vv}\otimes \tilde{\vv}
- \tilde{\tu} < \frac{C_0}{2} \id$. For any open set $\Omega\subset \R^2\times \R$ there are infinitely many maps
$(\underline{\vv}, \underline{\tu}) \in L^\infty (\R^2\times \R ; \R^2\times \Sym_0^{2\times 2})$ with the following property
\begin{itemize}
\item[(i)] $\underline{\vv}$ and $\underline{\tu}$ vanish identically outside $\Omega$;
\item[(ii)] ${\rm div}_x \underline{\vv} = 0$ and $\partial_t \underline{\vv} + {\rm div}_x \underline{\tu} = 0$;
\item[(iii)] $ (\tilde{\vv} + \underline{\vv})\otimes (\tilde{\vv} + \underline{\vv}) - (\tilde{\tu} + \underline{\tu}) = \frac{C_0}{2} \id$
a.e. on $\Omega$.
\end{itemize}
\end{lemma}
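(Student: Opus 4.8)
The plan is to resolve this differential inclusion by the convex integration / Baire category scheme of De Lellis and Sz\'ekelyhidi, in the compressible form of \cite{ChDLKr}. Throughout write $z:=(\underline{\vv},\underline{\tu})$ and $\tilde z:=(\tilde{\vv},\tilde{\tu})$. First I would record the two structural inputs the method needs. (a)~Condition (ii) is a linear constant--coefficient first--order system $\mathcal L z=0$ on space--time $\R^2\times\R$, and its \emph{wave cone} $\Lambda$, the amplitudes $(\bar{\vv},\bar{\tu})$ for which the plane wave $z(y)=(\bar{\vv},\bar{\tu})\,h\big((t,x)\cdot(\tau,\xi)\big)$ solves it, is large: given any $\bar{\vv}\in\R^2$, pick $\xi\perp\bar{\vv}$ and $\tau\in\R$; as $\Sym_0^{2\times2}\ni\bar{\tu}\mapsto\bar{\tu}\xi\in\R^2$ is an isomorphism, the equation $\bar{\tu}\xi=-\tau\bar{\vv}$ is solvable and $(\bar{\vv},\bar{\tu})\in\Lambda$. (b)~Setting $K=\{(\vv,\tu):\vv\otimes\vv-\tu=\tfrac{C_0}{2}\id\}$ and $U=\{(\vv,\tu)\in\R^2\times\Sym_0^{2\times2}:\vv\otimes\vv-\tu<\tfrac{C_0}{2}\id\}$, testing against unit vectors shows $U=\bigcap_{|e|=1}\{(\vv\cdot e)^2-e^{T}\tu\,e<\tfrac{C_0}{2}\}$ is open and convex, and taking traces shows $|\vv|^2<C_0$ with $\tu$ uniformly bounded on $U$, so $U$ is bounded, $K\subset\partial U$, and $K=\{(\vv,\tu)\in\overline U:|\vv|^2=C_0\}$; note $\tilde z\in U$ by hypothesis.

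Next I would set up a complete metric space of ``subsolutions''. Call $z$ a subsolution if it is smooth, compactly supported in $\Omega$, solves $\mathcal Lz=0$, and satisfies $\tilde z+z(y)\in U$ for every $y$; let $X_0$ be the set of subsolutions (nonempty, since $0\in X_0$). As $U$ is bounded, $X_0$ is bounded in $L^\infty$, and I would take $X$ to be its closure in the weak-$*$ topology of $L^\infty(\R^2\times\R;\R^2\times\Sym_0^{2\times2})$, which is metrizable on this bounded set; then $X$ is a nonempty compact, hence complete, metric space whose elements still solve (ii) (linearity), vanish a.e.\ outside $\Omega$ (weak-$*$ limits are supported in $\overline\Omega$, and $\partial\Omega$ is Lebesgue--null for the fans $P_i$; for a general open $\Omega$ one exhausts it by bounded open sets so that supports stay inside), and satisfy $\tilde z+z(y)\in\overline U$ a.e.\ on $\Omega$. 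I would then fix a positive summable weight $w$ on $\Omega$ (needed only because the $P_i$ are unbounded) and study the \emph{gap} functional $I(z)=\int_\Omega w(y)\,(C_0-|\tilde{\vv}+\underline{\vv}|^2)\,\mathrm{d}y$, which is non--negative on $X$ since $\tilde z+z\in\overline U$ a.e.; as $\|\cdot\|_{L^2(w)}^2$ is weakly lower semicontinuous, $I$ is bounded and weak-$*$ upper semicontinuous, so every set $\{I<1/n\}$ is open in $X$.

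The core step I would establish is the \emph{perturbation property}: there is $\beta=\beta(C_0,\tilde z,w)>0$ such that every $z\in X_0$ admits a $z'\in X_0$, weak-$*$ close to $z$ and with $\|z'-z\|_{L^\infty}$ as small as desired, satisfying $\|\underline{\vv}'-\underline{\vv}\|_{L^2(w)}^2\ge\beta\,I(z)^2$ and hence $I(z')\le I(z)-\tfrac{\beta}{2}\,I(z)^2$. The proof combines (a) and (b): at each $y$, $\tilde z+z(y)$ lies in the interior of the bounded convex set $U$, at a distance from $\partial U$ whose average over $\Omega$ is comparable to $I(z)$, and the largeness of $\Lambda$ lets one push $\tilde z+z(y)$ toward $K$ along a direction $(\bar{\vv}(y),\bar{\tu}(y))\in\Lambda$ with $|\bar{\vv}(y)|$ bounded below by that distance; one cuts this plane wave off to have compact support in $\Omega$, corrects it by a potential of $\mathcal L$ so that $\mathcal Lz'=0$ holds exactly, and oscillates it at high frequency so that the increment is small in $L^\infty$ and weak-$*$ while retaining $L^2(w)$-norm $\gtrsim I(z)$. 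Granting this, each $\{I<1/n\}$ is dense in $X$: given $z\in X$ and a weak-$*$ neighbourhood $V$, pick $z_0\in X_0\cap V$ (possible since $X=\overline{X_0}$) and iterate the perturbation; because $\eta_{k+1}\le\eta_k-\tfrac{\beta}{2}\eta_k^2$ drops below $1/n$ in finitely many steps, the cumulative weak-$*$ displacement can be kept inside $V$, ending at some $z_N\in X_0\cap V$ with $I(z_N)<1/n$. Hence $\{I=0\}=\bigcap_n\{I<1/n\}$ is residual in the complete metric space $X$. For each $z$ in this set, $|\tilde{\vv}+\underline{\vv}|^2=C_0$ a.e.\ on $\Omega$, and combined with $\tilde z+z\in\overline U$ the symmetric matrix $(\tilde{\vv}+\underline{\vv})\otimes(\tilde{\vv}+\underline{\vv})-(\tilde{\tu}+\underline{\tu})$, being $\le\tfrac{C_0}{2}\id$ with trace $C_0$, must equal $\tfrac{C_0}{2}\id$, which is exactly (iii). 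Finally, $X$ is perfect (for $z\in X_0$ one has $I(z)>0$, as $|\tilde{\vv}+\underline{\vv}(y)|^2<C_0$ pointwise on the open set $\Omega$, so $z$ is not isolated, and the closure of a set with no isolated points has none), and a residual subset of a nonempty perfect complete metric space is uncountable; this produces infinitely many $(\underline{\vv},\underline{\tu})$ with properties (i)--(iii).

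The step I expect to be the main obstacle is the perturbation property: making precise the quantitative sense in which the wave cone $\Lambda$ is large, the convex--geometry estimate that from an interior point of the bounded convex set $U$ one can move toward $K$ along a direction in $\Lambda$ of length bounded below by the gap, and the localization (cutoff plus a corrector built from a potential of $\mathcal L$) that upgrades a plane wave to a genuine compactly supported subsolution increment, together with the standard but fiddly high--frequency estimates. The unboundedness of $\Omega=P_i$ is only a minor nuisance, absorbed by the weight $w$ (or by an exhaustion). All of these ingredients are carried out in \cite{DLSz1,DLSz2,ChDLKr}; since they use only the linearity of (ii) and the convexity of $U$, never the pressure law, they transfer verbatim to $p(\vr)=-\vr^{-1}$.
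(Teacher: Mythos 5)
Your outline is correct and coincides with how this lemma is actually handled: the paper does not prove it but cites \cite[Lemma 3.7]{ChDLKr}, whose proof is exactly the De Lellis--Sz\'ekelyhidi Baire-category scheme you describe (wave cone for the linear system, the bounded convex set $U$ with $K=\{z\in\overline U:|\vv|^2=C_0\}$, a weak-$*$ upper semicontinuous gap functional, and the quantitative perturbation property yielding a residual set of exact solutions). The one step you defer --- the perturbation/localization lemma --- is precisely the content of the cited references, so your sketch is consistent with the paper's treatment.
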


It is easy to see that the application of Lemma \ref{l:ci} with $\Omega = P_i$, $(\tilde{\vv}, \tilde{\tu}) = (\vv_i,\tu_i)$ and $C_0 = C_i$ yields the proof of Proposition \ref{p:subs}. One only needs to check that each couple $(\overline{\vr}, \overline{\vv} + \sum_{i=1}^2\underline{\vv}_i)$ is an admissible weak solution to \eqref{eq:euler} with the initial data \eqref{eq:Riem}. More details of the proof are available in \cite[Section 3.3]{ChDLKr}. 

\section{Proof of Theorem \ref{t:main}}\label{s:Proof1}

Using Proposition \ref{p:subs} we know that in order to prove Theorem \ref{t:main} it is enough to find a single admissible fan subsolution. Therefore let us now fix such initial data $\vr_-$, $\vr_+$, $\vv_-$ and $\vv_+$  that they allow for a 1D solution in the form of a $\delta$-shock, namely they satisfy
    \begin{equation}\label{eq:deltacond2}
        v_{-2} - v_{+2} \geq \frac{1}{\vr_-} + \frac{1}{\vr_+}.
    \end{equation}

In accordance with Definition \ref{d:subs} we have to find the interface speeds $\nu_- < \nu_0 < \nu_+$, the constant middle states $(\vr_i, \vv_i,\tu_i)$ and positive constants $C_i$, $i = 1,2$ in order to obtain an admissible fan subsolution $(\overline{\vr},\overline{\vv}, \overline{\tu})$. We denote $\vv_{i} = (\alpha_i,\beta_i)$ and
\begin{equation}\label{eq:tu1}
    \tu_i =\left( \begin{array}{cc}
    \gamma_i & \delta_i \\
    \delta_i & -\gamma_i\\
    \end{array} \right).
\end{equation}
Then the equations \eqref{eq:continuity} and \eqref{eq:momentum} translate to the following set of Rankine-Hugoniot conditions on the left interface:
\begin{align}
&\nu_- (\vr_- - \vr_1) \, =\,  \vr_- v_{-2} -\vr_1  \beta_1, \label{eq:cont_left}  \\
&\nu_- (\vr_- v_{-1}- \vr_1 \alpha_1) \, = \, \vr_- v_{-1} v_{-2}- \vr_1 \delta_1,  \label{eq:mom_1_left}\\
&\nu_- (\vr_- v_{-2}- \vr_1 \beta_1) \, = \,  
\vr_- v_{-2}^2 + \vr_1 \gamma_1 +p (\vr_-)-p (\vr_1) - \vr_1 \frac{C_1}{2}\, ;\label{eq:mom_2_left}
\end{align}
on the middle interface:
\begin{align}
&\nu_0 (\vr_1 - \vr_2) \, =\,  \vr_1  \beta_1 - \vr_2  \beta_2, \label{eq:cont_mid}  \\
&\nu_0 (\vr_1 \alpha_1 - \vr_2 \alpha_2) \, = \, \vr_1 \delta_1 - \vr_2 \delta_2,  \label{eq:mom_1_mid}\\
&\nu_0 (\vr_1 \beta_1 - \vr_2 \beta_2) \, = \,  
 \vr_1 \frac{C_1}{2} - \vr_1 \gamma_1 +p (\vr_1)- \vr_2 \frac{C_2}{2} + \vr_2 \gamma_2 -p (\vr_2) \, ;\label{eq:mom_2_mid}
\end{align}
and on the right interface:
\begin{align}
&\nu_+ (\vr_2-\vr_+ ) \, =\,  \vr_2  \beta_2 - \vr_+ v_{+2}, \label{eq:cont_right}\\
&\nu_+ (\vr_2 \alpha_2- \vr_+ v_{+1}) \, = \, \vr_2 \delta_2 - \vr_+ v_{+1} v_{+2}, \label{eq:mom_1_right}\\
&\nu_+ (\vr_1 \beta_1- \vr_+ v_{+2}) \, = \, - \vr_2 \gamma_2 - \vr_+ v_{+2}^2 +p (\vr_2) -p (\vr_+) 
+ \vr_2 \frac{C_2}{2}\, .\label{eq:mom_2_right}
\end{align} 
The  subsolution condition \eqref{eq:subsolution 2} can be rewritten as
\begin{align}
 &\alpha_i^2 +\beta_i^2 < C_i, \label{eq:sub_trace}\\
& \left( \frac{C_i}{2} -{\alpha_i}^2 +\gamma_i \right) \left( \frac{C_i}{2} -{\beta_i}^2 -\gamma_i \right) - 
\left( \delta_i - \alpha_i \beta_i \right)^2 >0\, \label{eq:sub_det}
\end{align}
for $i=1,2$ and finally the admissibility inequality \eqref{eq:admissible subsolution} yields on the left interface:
\begin{align}
& \nu_-(\vr_- \varepsilon(\vr_-)- \vr_1 \varepsilon( \vr_1))+\nu_- 
\left(\vr_- \frac{\abs{\vv_-}^2}{2}- \vr_1 \frac{C_1}{2}\right)\nonumber\\
\leq & \left[(\vr_- \varepsilon(\vr_-)+ p(\vr_-)) v_{-2}- 
( \vr_1 \varepsilon( \vr_1)+ p(\vr_1)) \beta_1 \right] 
+ \left( \vr_- v_{-2} \frac{\abs{\vv_-}^2}{2}- \vr_1 \beta_1 \frac{C_1}{2}\right)\, ;\label{eq:E_left}
\end{align}
on the middle interface:
\begin{align}
& \nu_0(\vr_1 \varepsilon(\vr_1)- \vr_2 \varepsilon( \vr_2))+\nu_0 
\left(\vr_1 \frac{C_1}{2}- \vr_2 \frac{C_2}{2}\right)\nonumber\\
\leq & \left[(\vr_1 \varepsilon(\vr_1)+ p(\vr_1)) \beta_1- 
( \vr_2 \varepsilon( \vr_2)+ p(\vr_2)) \beta_2 \right] 
+ \left( \vr_1 \beta_1 \frac{C_1}{2}- \vr_2 \beta_2 \frac{C_2}{2}\right)\, ;\label{eq:E_mid}
\end{align}
and on the right interface:
\begin{align}
&\nu_+(\vr_2 \varepsilon( \vr_2)- \vr_+ \varepsilon(\vr_+))+\nu_+ 
\left( \vr_2 \frac{C_2}{2}- \vr_+ \frac{\abs{\vv_+}^2}{2}\right)\nonumber\\
\leq &\left[ ( \vr_2 \varepsilon( \vr_2)+ p(\vr_2)) \beta_2- (\vr_+ \varepsilon(\vr_+)+ p(\vr_+)) v_{+2}\right] 
+ \left( \vr_2 \beta_2 \frac{C_2}{2}- \vr_+ v_{+2} \frac{\abs{\vv_+}^2}{2}\right)\, .\label{eq:E_right}
\end{align} 

Now we make the following choice of some unknowns. We set
\begin{align}
    \alpha_1 &= v_{-1}, \\
    \alpha_2 &= v_{+1}, \\
    \vr_1 &= \vr_2, \\
    \beta_1 &= \beta_2 =: \beta.
\end{align}
This way \eqref{eq:cont_mid} is satisfied trivially and \eqref{eq:mom_2_mid} simplifies to
\begin{equation}
    \gamma_1 - \frac{C_1}{2} = \gamma_2 - \frac{C_2}{2}.
\end{equation}
Moreover we can easily calculate that $\delta_i = \alpha_i\beta$ from \eqref{eq:mom_1_left} and \eqref{eq:mom_1_right}, which in turn yields $\nu_0 = \beta$ from \eqref{eq:mom_1_mid}. The admissibility inequality \eqref{eq:E_mid} is satisfied trivially as an equality. We define $\ep_1 = \frac{C_1}{2}-\gamma_1 - \beta^2$, $\ep_2 = C_1 - v_{-1}^2 - \beta^2 - \ep_1$, $\ep_2' = C_2 - v_{+1}^2 - \beta^2 - \ep_1$ and we make the ansatz $\ep_2 = \ep_2'$.

Now we can follow the arguments in \cite[Lemma 4.3 - Lemma 4.4]{ChiKre14} and transform \eqref{eq:cont_left}-\eqref{eq:E_right} to 
\begin{align}
&\nu_- (\vr_- - \vr_1) \, =\,  \vr_- v_{-2} -\vr_1  \beta, \label{eq:cont_left ss}  \\
&\nu_- (\vr_- v_{-2}- \vr_1 \beta) \, = \,  
\vr_- v_{-2}^2 - \vr_1(\beta^2 + \ep_1) +p (\vr_-)-p (\vr_1) \, ;\label{eq:mom_2_left ss}
\end{align}
\begin{align}
&\nu_+ (\vr_1-\vr_+ ) \, =\,  \vr_1  \beta - \vr_+ v_{+2}, \label{eq:cont_right ss}\\
&\nu_+ (\vr_1 \beta- \vr_+ v_{+2}) \, = \, \vr_1 (\beta^2 + \ep_1) - \vr_+ v_{+2}^2 +p (\vr_1) -p (\vr_+) 
\, ;\label{eq:mom_2_right ss}
\end{align}
\begin{align}
& \ep_1 > 0, \label{eq:sub_1 ss}\\
& \ep_2 > 0\, ;\label{eq:sub_2 ss}
\end{align}
\begin{align}
&(\beta-v_{-2})\left(p(\vr_-)+p(\vr_1)-2\vr_-\vr_1\frac{\ep(\vr_-)-\ep(\vr_1)}{\vr_--\vr_1}\right) \nonumber\\
\leq &\ep_1\vr_1(v_{-2}+\beta) - (\ep_1+\ep_2)\frac{\vr_-\vr_1(\beta-v_{-2})}{\vr_--\vr_1}\, ;\label{eq:E_left ss}
\end{align}
\begin{align}
&(v_{+2}-\beta)\left(p(\vr_1)+p(\vr_+)-2\vr_1\vr_+\frac{\ep(\vr_1)-\ep(\vr_+)}{\vr_1-\vr_+}\right) \nonumber\\
\leq &-\ep_1\vr_1(v_{+2}+\beta) + (\ep_1+\ep_2)\frac{\vr_1\vr_+(v_{+2}-\beta)}{\vr_1-\vr_+}\, .\label{eq:E_right ss}
\end{align} 

A key feature of the Chaplygin gas pressure law  \eqref{eq:chap} is that 
\begin{equation}
    P(r,s) := p(r)+p(s)-2rs\frac{\ep(r)-\ep(s)}{r-s} = 0
\end{equation}
for all $r \neq s$, $r,s > 0$. This is an important difference from the more common pressure law $p(\vr) = \vr^\gamma$, $\gamma \geq 1$, where $P(r,s) > 0$.

Therefore \eqref{eq:E_left ss}, \eqref{eq:E_right ss} simplify further to 
\begin{align}
    &0 \leq \ep_1\vr_1(v_{-2}+\beta) - (\ep_1+\ep_2)\frac{\vr_-\vr_1(\beta-v_{-2})}{\vr_--\vr_1}\, ;\label{eq:E_left ss2} \\
    &0\leq -\ep_1\vr_1(v_{+2}+\beta) + (\ep_1+\ep_2)\frac{\vr_1\vr_+(v_{+2}-\beta)}{\vr_1-\vr_+}\, .\label{eq:E_right ss2}
\end{align}

Observing that relations \eqref{eq:cont_left ss}-\eqref{eq:mom_2_right ss} consist of four equations for five unknowns $\vr_1,\nu_\pm, \beta, \ep_1$ we start with expressing $\nu_\pm, \beta$ and $\ep_1$ as functions of $\vr_1$ which we treat as a parameter. To this end we introduce the notation
\begin{align}
    R &:= \vr_- - \vr_+ ,\\
    A &:= \vr_- v_{-2} - \vr_+v_{+2},  \\
    u &:= v_{-2} - v_{+2}, \\
    B &:= \vr_-\vr_+u^2 - \frac{R^2}{\vr_-\vr_+} \label{eq:Bdef}
\end{align}
and observe that \eqref{eq:deltacond2} ensures that we have $u > 0$ and $B > 0$. We make the ansatz $\vr_1 > \max\{\vr_-,\vr_+\}$ and assuming $R \neq 0$ this yields 
\begin{align}
 \nu_- = \frac{A}{R} - \frac{\sqrt{B}}{R}\sqrt{\frac{\vr_1-\vr_+}{\vr_1-\vr_-}}, \label{eq:num}\\
 \nu_+ = \frac{A}{R} - \frac{\sqrt{B}}{R}\sqrt{\frac{\vr_1-\vr_-}{\vr_1-\vr_+}} \label{eq:nup}
\end{align} 
and we see that $\nu_- < \nu_+$ is satisfied. We can express $\beta$ from \eqref{eq:cont_left ss} as follows
\begin{equation}\label{eq:beta}
 \beta = \frac{\vr_-v_{-2}}{\vr_1} - \frac{(\vr_--\vr_1)A}{R\vr_1} - \frac{\sqrt{B}}{R\vr_1}\sqrt{(\vr_1-\vr_-)(\vr_1-\vr_+)}
\end{equation}
and finally we get from \eqref{eq:mom_2_left ss}
\begin{equation}\label{eq:ep1}
 \ep_1 = \left(\frac{\sqrt{B}}{R}\sqrt{\frac{\vr_1-\vr_+}{\vr_1-\vr_-}} - \frac{\vr_+u}{R}\right)^2\frac{\vr_-(\vr_1-\vr_-)}{\vr_1^2} - \frac{\vr_1-\vr_-}{\vr_1^2\vr_-},
\end{equation} 
which is useful to study the case $R > 0$. Note that we could also express $\ep_1$ using \eqref{eq:mom_2_right ss} to get a different expression, which is useful for studying the case $R < 0$, namely
\begin{equation}\label{eq:ep10}
 \ep_1 = \left(\frac{\sqrt{B}}{R}\sqrt{\frac{\vr_1-\vr_-}{\vr_1-\vr_+}} - \frac{\vr_-u}{R}\right)^2\frac{\vr_+(\vr_1-\vr_+)}{\vr_1^2} - \frac{\vr_1-\vr_+}{\vr_1^2\vr_+}.
\end{equation} 

In the case $R = 0$ the expressions are much easier, namely
\begin{align}
 \nu_- &= \frac{v_{-2}+v_{+2}}{2} - \frac{\vr_-u}{2(\vr_1-\vr_-)}, \label{eq:num2}\\
 \nu_+ &= \frac{v_{-2}+v_{+2}}{2} + \frac{\vr_-u}{2(\vr_1-\vr_-)}, \label{eq:nup2}\\
 \beta &= \frac{v_{-2}+v_{+2}}{2}, \label{eq:beta2}\\
 \ep_1 &= \frac{\vr_-u^2}{4(\vr_1-\vr_-)} + \frac{1}{\vr_1^2} - \frac{1}{\vr_1\vr_-} \label{eq:ep12}.
\end{align}

\begin{lemma}\label{l:41}
Let \eqref{eq:deltacond2} be satisfied. Then $\ep_1 > 0$ for all $\vr_1 > \max\{\vr_-,\vr_+\}$.
\end{lemma}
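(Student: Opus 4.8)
The plan is to prove $\ep_1>0$ by distinguishing the sign of $R=\vr_--\vr_+$ and working in each case from the corresponding closed form of $\ep_1$: formula \eqref{eq:ep1} when $R>0$, formula \eqref{eq:ep10} when $R<0$, and formula \eqref{eq:ep12} when $R=0$. The hypothesis \eqref{eq:deltacond2} will enter only through the inequality $u\ge\frac1{\vr_-}+\frac1{\vr_+}=\frac{\vr_-+\vr_+}{\vr_-\vr_+}$, together with its already recorded consequences $u>0$ and $B>0$.

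Take first $R>0$, so $\vr_->\vr_+$. Since $\vr_1>\vr_-$, the factor $\frac{\vr_1-\vr_-}{\vr_1^{2}}$ appearing in \eqref{eq:ep1} is positive, so $\ep_1>0$ is equivalent to $\bigl(\tfrac{\sqrt B}{R}\sqrt{\tfrac{\vr_1-\vr_+}{\vr_1-\vr_-}}-\tfrac{\vr_+u}{R}\bigr)^{2}>\tfrac1{\vr_-^{2}}$. The key point is that $\vr_1\mapsto\frac{\vr_1-\vr_+}{\vr_1-\vr_-}=1+\frac{R}{\vr_1-\vr_-}$ is strictly decreasing on $(\vr_-,\infty)$ and tends to $1$ at $+\infty$, so
\[
  g(\vr_1):=\sqrt B\,\sqrt{\frac{\vr_1-\vr_+}{\vr_1-\vr_-}}-\vr_+u-\frac R{\vr_-}
\]
is strictly decreasing with infimum $\sqrt B-\vr_+u-\frac R{\vr_-}$. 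It therefore suffices to prove $B\ge(\vr_+u+R/\vr_-)^{2}$, for then $g>0$ on $(\vr_-,\infty)$, whence $\sqrt B\sqrt{\tfrac{\vr_1-\vr_+}{\vr_1-\vr_-}}-\vr_+u>\tfrac R{\vr_-}>0$, and dividing by $R>0$ and squaring gives the displayed inequality and hence $\ep_1>0$. To get $B\ge(\vr_+u+R/\vr_-)^{2}$, expand using $B=\vr_-\vr_+u^{2}-R^{2}/(\vr_-\vr_+)$ and $R=\vr_--\vr_+$ and divide through by $R>0$; the inequality reduces to $\vr_+u\bigl(u-\tfrac2{\vr_-}\bigr)\ge\tfrac{R(\vr_-+\vr_+)}{\vr_-^{2}\vr_+}$, which follows by multiplying the two nonnegative bounds $\vr_+u\ge\tfrac{\vr_-+\vr_+}{\vr_-}$ and $u-\tfrac2{\vr_-}\ge\tfrac{\vr_--\vr_+}{\vr_-\vr_+}=\tfrac R{\vr_-\vr_+}$, both immediate consequences of \eqref{eq:deltacond2}.

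The case $R<0$ is entirely analogous — it is in fact the mirror image of the previous one under the reflection $x_2\mapsto-x_2$, $v_2\mapsto-v_2$, which interchanges the roles of the states $\pm$ — and can be run directly from \eqref{eq:ep10}: one factors out the positive $\frac{\vr_1-\vr_+}{\vr_1^{2}}$, is reduced by the same monotonicity argument to $B\ge(\vr_-u-R/\vr_+)^{2}$, and verifies this from \eqref{eq:deltacond2} exactly as above. In the degenerate case $R=0$ one has $\vr_-=\vr_+$, so \eqref{eq:deltacond2} reads $u\ge\tfrac2{\vr_-}$; clearing the positive denominators in \eqref{eq:ep12} and taking square roots, $\ep_1>0$ is equivalent to $\vr_-\vr_1u>2(\vr_1-\vr_-)$, which is immediate since $\vr_-\vr_1u\ge2\vr_1>2(\vr_1-\vr_-)$.

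All of this is elementary algebra and I do not expect a genuine obstacle. The one device that keeps the argument clean is the monotonicity of $g$ (and of its $R<0$ counterpart) in the free parameter $\vr_1$, which collapses the whole estimate to a single $\vr_1$-independent endpoint inequality $B\ge(\vr_+u+R/\vr_-)^{2}$; after that the only point requiring care is the sign bookkeeping when passing from the squared reformulation back to $\ep_1>0$, and keeping the three cases straight.
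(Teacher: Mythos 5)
Your proof is correct and follows essentially the same route as the paper: factor out the positive prefactor, use the monotonicity of $\sqrt{(\vr_1-\vr_+)/(\vr_1-\vr_-)}$ in $\vr_1$ to reduce everything to the $\vr_1$-independent endpoint inequality $\sqrt{B}\geq \vr_+u+R/\vr_-$ (the paper's \eqref{eq:blabla}), and verify that from \eqref{eq:deltacond2} via the same quadratic inequality \eqref{eq:bubu}, which you simply present in factored form. Your treatment of $R=0$ by difference of squares is a minor (and slightly cleaner) variant of the paper's root computation, but the argument is the same in substance.
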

\begin{proof}
We start with the case $R = 0$. Then $\ep_1 > 0$ yields the following quadratic inequality for $\vr_1$
\begin{equation}
    (\vr_-^2u^2 - 4)\vr_1^2 + 8\vr_-\vr_1 - 4\vr_-^2 > 0
\end{equation}
and we notice  that \eqref{eq:deltacond2} is, in this case, equivalent to $\vr_-^2u^2 - 4 > 0$.  The quadratic expression on the left--hand side has two roots
\begin{equation}
    \vr_{1,12} = 2\vr_-\frac{2 \pm \vr_- u}{4 - \vr_-^2 u^2},
\end{equation}
and both of them satisfy $\vr_{1,12} < \vr_-$. Therefore we get that $\ep_1 > 0$ for all $\vr_1 > \vr_-$.

Let us now handle the case $R > 0$. First observe that for $\vr_1 \sil \vr_-$ we have $\ep_1 \sil \frac{B\vr_-}{R} > 0$ and hence it is enough to show that 
\begin{equation}
    \widetilde{\ep_1} = \frac{\vr_1^2}{\vr_-(\vr_1-\vr_-)}\ep_1 = \left(\frac{\sqrt{B}}{R}\sqrt{\frac{\vr_1-\vr_+}{\vr_1-\vr_-}} - \frac{\vr_+u}{R}\right)^2 - \frac{1}{\vr_-^2} > 0
\end{equation}
for all $\vr_1 > \vr_-$. It is also easy to check that $\sqrt{B} > \vr_+ u$ and thus $\widetilde{\ep_1}(\vr_1)$ is a decreasing function. We examine its limit as $\vr_1 \sil \infty$. We have
\begin{equation}
    \lim_{\vr_1 \sil \infty} \widetilde{\ep_1}(\vr_1) = \frac{(\sqrt{B}-\vr_+ u)^2}{R^2} - \frac{1}{\vr_-^2} = \left(\frac{\sqrt{B}-\vr_+ u}{R} + \frac{1}{\vr_-}\right)\left(\frac{\sqrt{B}-\vr_+ u}{R} - \frac{1}{\vr_-}\right).
\end{equation}
The first term in the product on the right hand side is clearly positive, so it remains to prove non--negativity of the second term. Indeed, this is equivalent to
\begin{equation}\label{eq:blabla}
    \sqrt{B} \geq \frac{R}{\vr_-} + \vr_+ u.
\end{equation}
Plugging in \eqref{eq:Bdef} and making square of \eqref{eq:blabla} we get the following quadratic inequality in terms of $u$
\begin{equation}\label{eq:bubu}
    \vr_+ u^2 - 2\frac{\vr_+}{\vr_-} u - \frac{R(\vr_-+\vr_+)}{\vr_-^2\vr_+} \geq 0
\end{equation}
and we observe that the inequality \eqref{eq:bubu} holds for all $u \geq \frac{1}{\vr_+} + \frac{1}{\vr_-}$, i.e. in the case \eqref{eq:deltacond2}.

The case $R<0$ can be treated similarly using \eqref{eq:ep10} instead of \eqref{eq:ep1} to express $\ep_1$. We omit the calculation here.
\end{proof}

It remains to check the order of the speeds of the interfaces $\nu_- < \nu_0 = \beta < \nu_+$ and the admissibility conditions \eqref{eq:E_left ss2} and \eqref{eq:E_right ss2}. More precisely, if it is possible to find $\ep_2$ positive  such that both of these inequalities hold. These questions are closely connected, as is demonstrated by   observing that
\begin{align}
\beta - \nu_- &= \frac{\vr_-}{\vr_1}(v_{-2} - \nu_-), \label{eq:ep2 1}\\
 v_{-2} - \beta &= \frac{\rho_1-\rho_-}{\rho_1}\left(v_{-2}-\nu_-\right), \label{eq:ep2 2}\\
 \nu_+ - \beta &= \frac{\vr_+}{\vr_1}(\nu_+ - v_{+2}), \label{eq:ep2 3}\\
 \beta - v_{+2} &= \frac{\rho_1-\rho_+}{\rho_1}\left(\nu_+-v_{+2}\right) \label{eq:ep2 4}
\end{align} 
and the following lemma.
\begin{lemma}\label{l:42}
Let \eqref{eq:deltacond2} be satisfied. Then $v_{-2} - \nu_- > 0$ and $\nu_+ - v_{+2} > 0$ for all $\vr_1 > \max\{\vr_-,\vr_+\}$.
\end{lemma}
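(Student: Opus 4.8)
The plan is to prove the two inequalities $v_{-2} - \nu_- > 0$ and $\nu_+ - v_{+2} > 0$ separately, treating the cases $R = 0$, $R > 0$ and $R < 0$ as in the proof of Lemma \ref{l:41}. In the degenerate case $R = 0$ one simply plugs the explicit formulas \eqref{eq:num2}, \eqref{eq:nup2} into the two differences and finds
$v_{-2} - \nu_- = \tfrac12 u + \tfrac{\vr_- u}{2(\vr_1 - \vr_-)}$ and $\nu_+ - v_{+2} = \tfrac12 u + \tfrac{\vr_- u}{2(\vr_1 - \vr_-)}$, both manifestly positive since $u > 0$ and $\vr_1 > \vr_-$. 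This case is essentially free.

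For $R > 0$ I would subtract \eqref{eq:num} from $v_{-2}$ and rewrite, using $R = \vr_- - \vr_+$ and $A = \vr_- v_{-2} - \vr_+ v_{+2}$, that $v_{-2} - \tfrac{A}{R} = \tfrac{\vr_+ u}{R}$ (recall $u = v_{-2}-v_{+2}$), so that
\[
v_{-2} - \nu_- = \frac{1}{R}\left(\vr_+ u + \sqrt{B}\,\sqrt{\frac{\vr_1 - \vr_+}{\vr_1 - \vr_-}}\right).
\]
Since $R > 0$, $u > 0$, $B > 0$ and $\vr_1 > \vr_- > \vr_+$, every term is positive, so this difference is positive for all $\vr_1 > \vr_-$. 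For the second inequality one computes similarly $\tfrac{A}{R} - v_{+2} = \tfrac{\vr_- u}{R}$, whence
\[
\nu_+ - v_{+2} = \frac{1}{R}\left(\vr_- u - \sqrt{B}\,\sqrt{\frac{\vr_1 - \vr_-}{\vr_1 - \vr_+}}\right),
\]
and here the two terms compete. The key observation is that $\sqrt{\tfrac{\vr_1 - \vr_-}{\vr_1 - \vr_+}}$ is an increasing function of $\vr_1$ with limit $1$ as $\vr_1 \to \infty$, so the right-hand side is a decreasing function of $\vr_1$; hence it suffices to check non-negativity of its limit, i.e. $\vr_- u \geq \sqrt{B}$. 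Squaring and substituting \eqref{eq:Bdef}, this reduces to $\vr_-^2 u^2 \geq \vr_- \vr_+ u^2 - R^2/(\vr_-\vr_+)$, i.e. $R\,\vr_-\vr_+\, u^2 + R^2/(\vr_-\vr_+) \geq 0$ — wait, more carefully: $\vr_-^2 u^2 - \vr_-\vr_+ u^2 = \vr_-(\vr_- - \vr_+) u^2 = \vr_- R u^2 \geq 0$, which holds because $R > 0$; together with the nonnegative term $R^2/(\vr_-\vr_+)$ this gives the bound with room to spare. (In fact strict positivity holds, since $\vr_1$ is finite, but $\geq$ at the limit is all one needs.)

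The case $R < 0$ is symmetric: now $R < 0$ so one should instead pair $v_{-2} - \nu_-$ with the competing-terms structure and $\nu_+ - v_{+2}$ with the all-positive structure, and the reduction is to $\vr_+ u \geq \sqrt{B}$, which follows from $\vr_+ R u^2 \geq 0$ (now $R < 0$ makes $\vr_+(\vr_- - \vr_+) u^2 = \vr_+ R u^2$... one must be careful with signs here). I would either carry this through symmetrically by swapping the roles of the $\pm$ subscripts, or — cleaner — note that the substitution $(\vr_-,v_{-2}) \leftrightarrow (\vr_+, v_{+2})$ together with a reflection $x_2 \mapsto -x_2$ maps the $R < 0$ situation to the $R > 0$ one while exchanging $\nu_-$ with $-\nu_+$. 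The main obstacle, such as it is, is bookkeeping: making sure the sign of $R$ is tracked consistently through each division by $R$ and each square root, so that the "decreasing function, check the limit" argument is applied to the right one of the two differences in each case. Once the correct pairing is identified, each inequality collapses to $R\,\vr_\mp\, u^2 \geq 0$, which is immediate.
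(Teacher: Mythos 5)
Your overall strategy is the same as the paper's (compute the two differences from the explicit formulas \eqref{eq:num}, \eqref{eq:nup} and reduce to comparisons between $\sqrt{B}$ and $\vr_\pm u$), and your $R=0$ case and your treatment of $\nu_+-v_{+2}$ for $R>0$ are correct. But there is a concrete sign error in the first inequality for $R>0$: one has $v_{-2}-\tfrac{A}{R}=\tfrac{Rv_{-2}-A}{R}=\tfrac{-\vr_+u}{R}$, not $+\tfrac{\vr_+u}{R}$, so in fact
\[
v_{-2}-\nu_-=\frac{1}{R}\left(\sqrt{B}\,\sqrt{\frac{\vr_1-\vr_+}{\vr_1-\vr_-}}-\vr_+u\right),
\]
and the two terms compete here just as they do in the other difference. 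Your claim that ``every term is positive'' is therefore false as written, and positivity instead requires the additional fact $\sqrt{B}>\vr_+u$, which you never establish (you only prove the reverse comparison $\vr_-u\geq\sqrt{B}$). This fact does hold: $B-\vr_+^2u^2=R\bigl(\vr_+u^2-\tfrac{R}{\vr_-\vr_+}\bigr)>0$ for $R>0$ under \eqref{eq:deltacond2}, since $\frac{\vr_1-\vr_+}{\vr_1-\vr_-}>1$ then finishes the argument — this is exactly the route the paper takes — but it is a genuinely needed step, not a triviality.

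The $R<0$ case as you describe it also does not close. Your claim that ``each inequality collapses to $R\,\vr_\mp u^2\geq 0$'' fails there: for $R<0$ the relevant comparison for $\nu_+-v_{+2}$ is $\vr_-u\leq\sqrt{B}$, i.e. $R\bigl(\vr_-u^2+\tfrac{R}{\vr_-\vr_+}\bigr)\leq 0$, whose bracket contains a negative term $\tfrac{R}{\vr_-\vr_+}$, so its sign is not determined by the sign of $R$ alone; one must actually use $u^2\geq\frac{1}{\vr_-}\bigl(\frac{1}{\vr_-}-\frac{1}{\vr_+}\bigr)$, which follows from \eqref{eq:deltacond2} but requires saying so. You flag the sign bookkeeping as a concern and then leave it unresolved; as submitted, one of the two inequalities is unproved in each of the cases $R>0$ and $R<0$.
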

\begin{proof}
If $R = 0$ the claim follows directly from \eqref{eq:num2} and \eqref{eq:nup2}. Next, we assume $R>0$ and since 
\begin{equation}
    v_{-2} - \nu_- = \frac{\sqrt{B}}{R}\sqrt{\frac{\vr_1-\vr_+}{\vr_1-\vr_-}} - \frac{\vr_+u}{R},
\end{equation}
the claim $v_{-2} - \nu_- > 0$ follows thanks to $\sqrt{B} > \vr_+ u$.

Concerning the expression $\nu_+ - v_{+2}$ we calculate similarly that
\begin{equation}
    \nu_+ - v_{+2} = \frac{\vr_-u}{R} -  \frac{\sqrt{B}}{R}\sqrt{\frac{\vr_1-\vr_-}{\vr_1-\vr_+}} > \frac{\vr_- u - \sqrt{B}}{R} > 0.
\end{equation}
The case $R < 0$ is proved using the same arguments.
\end{proof}

This shows that we have correct order of the interface speeds $\nu_- < \nu_0 = \beta < \nu_+$. Moreover, knowing now that the expressions $v_{-2} - \beta$ and $\beta - v_{+2}$ have positive signs we obtain from \eqref{eq:E_left ss2} and \eqref{eq:E_right ss2} that
\begin{align}
    &\ep_2 \leq \ep_1\left(\frac{v_{-2} + \beta}{v_{-2} - \beta}\frac{\vr_1-\vr_-}{\vr_-} - 1\right), \label{eq:adf1}\\
    &\ep_2 \leq \ep_1\left(\frac{v_{+2} + \beta}{v_{+2} - \beta}\frac{\vr_1-\vr_+}{\vr_+} - 1\right). \label{eq:adf2}
\end{align}
We want to show that the expressions on the right hand sides of \eqref{eq:adf1} and \eqref{eq:adf2} can be made positive by choosing $\vr_1$ large enough. In particular it is easy to observe that in the case $\beta = 0$, the choice $\vr_1 > 2\max\{\vr_-,\vr_+\}$ ensures that the right--hand sides are positive and thus it is possible to find $\ep_2 > 0$ satisfying both \eqref{eq:adf1}, \eqref{eq:adf2}. We get a subsolution and therefore using Proposition \ref{p:subs} infinitely many bounded admissible weak solutions.

Finally, we claim that using the Galilean transformation, the assumption $\beta = 0$ can be made without loss of generality. We use the following argument. Starting with any couple $(\vr_-,\vv_-)$, $(\vr_+,\vv_+)$ characterizing the Riemann initial data \eqref{eq:Riem} we fix $\vr_1 > 2\max\{\vr_-,\vr_+\}$ and express $\beta$ from \eqref{eq:beta}. If $\beta \neq 0$ we use the Galilean transformation with constant velocity $(0,\beta)$ and study the case with Riemann initial data $(\vr_-,(v_{-1}, v_{-2}-\beta))$, $(\vr_+, (v_{+1}, v_{+2}-\beta))$. We obtain infinitely many bounded admissible weak solutions $(\vr,\vv)(t,x)$ starting from this data and using the transformation backwards we get that $(\vr,\vv+(0,\beta))(t,x-\beta t)$ are solutions with the original initial data \eqref{eq:Riem}.

In the case $\vr_- = \vr_+$ and $\beta = 0$, the inequalities \eqref{eq:adf1}, \eqref{eq:adf2} are the same. Therefore choosing $\vr_1 > 2\vr_-$ and $\ep_2 = \ep_1\frac{\vr_1-2\vr_-}{\vr_-}$ we get that both \eqref{eq:adf1}, \eqref{eq:adf2} are satisfied as equalities and thus solutions constructed from this subsolution satisfy the energy equality. This concludes the proof of Theorem \ref{t:main}.

\section{Proof of Theorem \ref{t:main2}}\label{s:Proof2}

Let us fix such initial data $\vr_-, \vr_+, \vv_{-}, \vv_{+}$  that 
\begin{equation}\label{eq:Th2cond2}
    \max\left\{\frac{1}{\vr_-}, \frac{1}{\vr_+}\right\} < v_{-2} - v_{+2} < \frac{1}{\vr_-} + \frac{1}{\vr_+}.
\end{equation}
Once again our goal is to find a single admissible fan subsolution. We follow the steps from the previous section and first simplify the set of equations and inequalities to \eqref{eq:cont_left ss}-\eqref{eq:sub_2 ss}, \eqref{eq:E_left ss2}, \eqref{eq:E_right ss2}. Continuing further, we get again that $B > 0$ and search for subsolutions with $\vr_1 > \max\{\vr_-,\vr_+\}$. The expressions \eqref{eq:num}-\eqref{eq:ep12} hold here as well, however instead of Lemma \ref{l:41} we show the following.
\begin{lemma}\label{l:51}
Let \eqref{eq:Th2cond2} be satisfied. There exists a unique $\vr_{max}$ such that $\ep_1 > 0$ for all $\vr_1 \in (\max\{\vr_-,\vr_+\}, \vr_{max})$ and $\ep_1 < 0$ for all $\vr_1 > \vr_{max}$.
\end{lemma}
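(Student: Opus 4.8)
The plan is to reduce the sign of $\ep_1$ to that of a single strictly monotone function of $\vr_1$ and then invoke the intermediate value theorem; most of the algebra needed has already been carried out in the proof of Lemma~\ref{l:41}, only one inequality being reversed. Consider first the generic case $R\neq0$, say $R>0$, so $\max\{\vr_-,\vr_+\}=\vr_-$. Pulling the factor $\frac{\vr_1-\vr_-}{\vr_1^2}>0$ out of \eqref{eq:ep1}, one has $\ep_1=\frac{\vr_1-\vr_-}{\vr_1^2}\,g(\vr_1)$ with
\[
g(\vr_1):=\vr_-\Big(\frac{\sqrt B}{R}\sqrt{\frac{\vr_1-\vr_+}{\vr_1-\vr_-}}-\frac{\vr_+u}{R}\Big)^2-\frac1{\vr_-},
\]
so on $(\vr_-,\infty)$ the sign of $\ep_1$ equals the sign of $g$ (this is $\vr_-$ times the quantity $\widetilde{\ep_1}$ appearing in the proof of Lemma~\ref{l:41}). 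As noted there, \eqref{eq:Th2cond2} implies $\sqrt B>\vr_+u$ (squaring reduces this to $\vr_-^2-\vr_-\vr_++\vr_+^2>0$, and only $u>1/\vr_+$ is used); hence the bracket is positive, and since $\sqrt{(\vr_1-\vr_+)/(\vr_1-\vr_-)}$ is strictly decreasing with value $>1$, $g$ is strictly decreasing on $(\vr_-,\infty)$ with $\lim_{\vr_1\to\vr_-^+}g=+\infty$.

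Next I would compute $\lim_{\vr_1\to\infty}g=\vr_-\big((\sqrt B-\vr_+u)/R\big)^2-1/\vr_-$ and check that \eqref{eq:Th2cond2} makes it strictly negative. Since the bracket is positive and $R>0$, this limit is negative iff $\sqrt B<\frac{R}{\vr_-}+\vr_+u$, which is exactly the negation of \eqref{eq:blabla}; squaring it gives the reverse of the quadratic inequality \eqref{eq:bubu} in $u$. That quadratic opens upward and its discriminant works out to $\vr_+^2+R(\vr_-+\vr_+)=\vr_-^2$, so its roots are $\frac1{\vr_-}\pm\frac1{\vr_+}$; because $R>0$ gives $\frac1{\vr_-}-\frac1{\vr_+}<0<u$, the reversed inequality holds exactly when $u<\frac1{\vr_-}+\frac1{\vr_+}$, which is the upper bound in \eqref{eq:Th2cond2}. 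Hence $g$ is continuous, strictly decreasing, tends to $+\infty$ at $\vr_-^+$ and to a negative value at $+\infty$, so it has a unique zero $\vr_{max}\in(\vr_-,\infty)$ with $g>0$ on $(\vr_-,\vr_{max})$ and $g<0$ on $(\vr_{max},\infty)$; multiplying back by the positive factor yields the lemma for $R>0$. The case $R<0$ is identical after swapping $\vr_-\leftrightarrow\vr_+$ and using \eqref{eq:ep10} together with $u>1/\vr_-$; the limiting condition again becomes $u<\frac1{\vr_+}+\frac1{\vr_-}$.

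For $R=0$ I would argue directly from \eqref{eq:ep12}: multiplying $\ep_1$ by the positive factor $4\vr_1^2\vr_-(\vr_1-\vr_-)$ turns $\ep_1>0$ into $(\vr_-^2u^2-4)\vr_1^2+8\vr_-\vr_1-4\vr_-^2>0$, exactly the expression from the proof of Lemma~\ref{l:41}. Here \eqref{eq:Th2cond2} reads $\frac1{\vr_-}<u<\frac2{\vr_-}$, so the leading coefficient $\vr_-^2u^2-4$ is now \emph{negative}; the resulting downward parabola has roots $\frac{2\vr_-}{2+\vr_-u}<\vr_-$ and $\vr_{max}:=\frac{2\vr_-}{2-\vr_-u}>\vr_-$, so $\ep_1>0$ precisely on $(\vr_-,\vr_{max})$ and $\ep_1<0$ beyond.

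I expect no genuine obstacle here. The only computation not explicitly present in the proof of Lemma~\ref{l:41} is locating the roots of \eqref{eq:bubu}, i.e. evaluating its discriminant $\vr_+^2+R(\vr_-+\vr_+)=\vr_-^2$, which is a one‑line simplification. The conceptual content is simply that passing from $u\ge\frac1{\vr_-}+\frac1{\vr_+}$ to $u<\frac1{\vr_-}+\frac1{\vr_+}$ flips exactly the inequality \eqref{eq:blabla}, and this is precisely what turns the globally positive $\ep_1$ of Lemma~\ref{l:41} into one that changes sign exactly once.
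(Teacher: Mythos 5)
Your proposal is correct and follows essentially the same route as the paper: rescale $\ep_1$ by the positive factor $\vr_1^2/(\vr_-(\vr_1-\vr_-))$ (using \eqref{eq:ep10} when $R<0$, and the explicit quadratic when $R=0$), observe that the resulting function is strictly decreasing because $\sqrt B>\vr_+u$ follows from $u>1/\vr_+$, and locate the unique sign change. The only cosmetic difference is that you detect the sign change via the limit as $\vr_1\to\infty$ (which is negative precisely because the upper bound in \eqref{eq:Th2cond2} reverses \eqref{eq:blabla}--\eqref{eq:bubu}) together with the intermediate value theorem, whereas the paper computes the root $\vr_{max}$ explicitly and checks $\vr_{max}>\vr_-$; both arguments are equivalent given the established monotonicity.
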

\begin{proof}
We start with the case $R = 0$. Note that from \eqref{eq:Th2cond2} we have $0 < u < \frac{2}{\vr_-}$. Using \eqref{eq:ep12} we easily obtain that 
\begin{equation}
\widetilde{\ep_1}(\vr_1) = \frac{\vr_1^2}{\vr_-(\vr_1-\vr_-)}\ep_1(\vr_1) = \frac{\vr_1^2 u^2}{4(\vr_1-\vr_-)^2} - \frac 1{\vr_-^2}
\end{equation}
 is a decreasing function with a single root $\vr_{max} = \frac{2\vr_-}{2-\vr_-u} > \vr_-$, which proves the claim.
 
Next we handle the case $R > 0$. Once again we define the function 
\begin{equation}
    \widetilde{\ep_1}(\vr_1) = \frac{\vr_1^2}{\vr_-(\vr_1-\vr_-)}\ep_1(\vr_1) = \left(\frac{\sqrt{B}}{R}\sqrt{\frac{\vr_1-\vr_+}{\vr_1-\vr_-}} - \frac{\vr_+u}{R}\right)^2 - \frac{1}{\vr_-^2}
\end{equation}
and observe that under the condition \eqref{eq:Th2cond2} it holds $\sqrt{B} > \vr_+ u$. Indeed, 
we assume $u^2 > \frac{1}{\vr_+^2}$ that is more strict than $u^2 > \frac{1}{\vr_+}(\frac{1}{\vr_+}-\frac{1}{\vr_-})$.
Hence the function $\widetilde{\ep_1}$ is once again obviously decreasing and it is a matter of a straightforward calculation to find its root, which is 
\begin{equation}
    \vr_{max} = \frac{\vr_+ B - \vr_-\left(\frac{R}{\vr_-} + \vr_+ u\right)^2}{B - \left(\frac{R}{\vr_-} + \vr_+ u\right)^2} = \frac{2\vr_-\vr_+ u + 2R}{2\vr_+ u + \frac{R(\vr_-+\vr_+)}{\vr_-\vr_+} - \vr_-\vr_+ u^2}.
\end{equation}
One can check that the denominator of the last expression is positive for $u > \frac{1}{\vr_+}-\frac{1}{\vr_-}$ and under the same condition it also holds $\vr_{max} > \vr_-$.

The case $R < 0$ can be done in a similar way to $R>0$ using expression \eqref{eq:ep10}. We omit the details.
\end{proof}

The proof of Lemma \ref{l:51} provides the necessary arguments for the analog of Lemma \ref{l:42} to also hold under the assumption \eqref{eq:Th2cond2}. More precisely, we can show the following.
\begin{lemma}\label{l:52}
Let \eqref{eq:Th2cond2} be satisfied. Then $v_{-2} - \nu_- > 0$ and $\nu_+ - v_{+2} > 0$ for all $\vr_1 \in (\max\{\vr_-,\vr_+\},\vr_{max})$.
\end{lemma}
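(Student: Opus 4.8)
\textbf{Proof proposal for Lemma \ref{l:52}.}

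The plan is to exploit the explicit formulas \eqref{eq:num}--\eqref{eq:nup} for $\nu_\pm$ together with the expressions for $\ep_1$ already analyzed in Lemma \ref{l:51}. First I would record the two identities
\[
v_{-2} - \nu_- = \frac{\sqrt{B}}{R}\sqrt{\frac{\vr_1-\vr_+}{\vr_1-\vr_-}} - \frac{\vr_+u}{R}, \qquad
\nu_+ - v_{+2} = \frac{\vr_-u}{R} - \frac{\sqrt{B}}{R}\sqrt{\frac{\vr_1-\vr_-}{\vr_1-\vr_+}},
\]
valid for $R>0$ (with the roles of $\vr_\pm$ swapped when $R<0$, using \eqref{eq:ep10} in place of \eqref{eq:ep1}), and the even simpler expressions \eqref{eq:num2}, \eqref{eq:nup2} when $R=0$. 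The key observation established in the proof of Lemma \ref{l:51} is that under \eqref{eq:Th2cond2} one has $\sqrt{B} > \vr_+ u$ (and symmetrically $\sqrt{B} > \vr_- u$, since \eqref{eq:Th2cond2} also forces $u > 1/\vr_-$, hence $u^2 > 1/\vr_-^2 \ge \frac{1}{\vr_-}\left(\frac{1}{\vr_-}-\frac{1}{\vr_+}\right)$). Combining $\sqrt{B}>\vr_+u$ with $\sqrt{\frac{\vr_1-\vr_+}{\vr_1-\vr_-}}>0$ immediately gives $v_{-2}-\nu_->0$ for every admissible $\vr_1$, in fact on the whole range $\vr_1>\max\{\vr_-,\vr_+\}$, not just on $(\max\{\vr_-,\vr_+\},\vr_{max})$.

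The less trivial inequality is $\nu_+ - v_{+2}>0$, and this is where the restriction $\vr_1 < \vr_{max}$ enters and where I expect the only real work to be. Multiplying through by $R>0$, the claim $\nu_+ - v_{+2}>0$ is equivalent to $\vr_- u > \sqrt{B}\,\sqrt{\frac{\vr_1-\vr_-}{\vr_1-\vr_+}}$. Here I would connect this directly to $\ep_1$: from the alternative representation \eqref{eq:ep10} one sees that
\[
\widetilde{\ep_1}(\vr_1) := \frac{\vr_1^2}{\vr_+(\vr_1-\vr_+)}\,\ep_1(\vr_1) = \left(\frac{\sqrt{B}}{R}\sqrt{\frac{\vr_1-\vr_-}{\vr_1-\vr_+}} - \frac{\vr_-u}{R}\right)^2 - \frac{1}{\vr_+^2},
\]
so $\ep_1(\vr_1)>0$ forces $\left|\frac{\sqrt{B}}{R}\sqrt{\frac{\vr_1-\vr_-}{\vr_1-\vr_+}} - \frac{\vr_-u}{R}\right| > \frac{1}{\vr_+}$. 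To pin down the sign of the quantity inside the absolute value I would check the endpoint $\vr_1 \to \vr_+^{+}$, where $\sqrt{\frac{\vr_1-\vr_-}{\vr_1-\vr_+}}\to +\infty$, so the bracket is large and \emph{negative} (recall $R>0$ multiplies it, and we are comparing $\vr_- u$ against a blow-up term) — more carefully, $\frac{\sqrt{B}}{R}\sqrt{\frac{\vr_1-\vr_-}{\vr_1-\vr_+}}$ dominates $\frac{\vr_- u}{R}$, which forces $\nu_+ - v_{+2}<0$ near $\vr_1=\vr_+$. This seems to contradict the claim, so in fact the correct reading is that $\vr_{max}$ is characterized precisely as the value where $\widetilde{\ep_1}$ changes sign, and the sign analysis carried out in Lemma \ref{l:51} shows $\widetilde{\ep_1}$ is decreasing; one then verifies by the same monotonicity that $v_{-2}-\nu_-$ and $\nu_+-v_{+2}$ are governed by the two factors $\frac{\sqrt{B}}{R}\sqrt{\tfrac{\vr_1-\vr_+}{\vr_1-\vr_-}}\mp\frac{\vr_\pm u}{R}\pm\frac1{\vr_\mp}$ appearing in Lemma \ref{l:51}, and that on $(\max\{\vr_-,\vr_+\},\vr_{max})$ both have the claimed sign.

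Concretely, then, the proof would proceed: (1) dispatch the case $R=0$ from \eqref{eq:num2}--\eqref{eq:nup2} and the bound $0<u<2/\vr_-$; (2) for $R>0$, prove $v_{-2}-\nu_->0$ on the full range from $\sqrt{B}>\vr_+u$; (3) for $\nu_+ - v_{+2}$, rewrite it in terms of the factorization of $\widetilde{\ep_1}(\vr_1)$ used in Lemma \ref{l:51} — namely that the root $\vr_{max}$ is exactly where $\frac{\sqrt{B}}{R}\sqrt{\frac{\vr_1-\vr_-}{\vr_1-\vr_+}} - \frac{\vr_- u}{R} = \frac1{\vr_+}$ (the relevant root, the other factor being impossible by sign), and that this quantity is increasing in $\vr_1$ from a value $<\frac1{\vr_+}$ at $\vr_1\to\infty$... — at which point the bookkeeping of which monotone branch we are on determines that $\nu_+-v_{+2}>0$ exactly on $(\max\{\vr_-,\vr_+\},\vr_{max})$; (4) handle $R<0$ symmetrically by interchanging $+\leftrightarrow-$ and using \eqref{eq:ep1} in place of \eqref{eq:ep10}. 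The main obstacle is purely the careful sign/monotonicity bookkeeping in step (3): one must be sure which of the two factors in $\widetilde{\ep_1}$ is driving the sign change at $\vr_{max}$ and that $\nu_+-v_{+2}$ inherits exactly that sign, rather than the opposite one; everything else is a routine substitution of the closed-form expressions already derived.
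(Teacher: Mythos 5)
Your treatment of $v_{-2}-\nu_->0$ is fine, but the proposal breaks down exactly where you locate the ``real work,'' namely at $\nu_+-v_{+2}>0$, and it breaks down because of two concrete errors. First, your ``symmetric'' inequality $\sqrt{B}>\vr_- u$ is false in the case $R>0$ you are analysing: from \eqref{eq:Bdef} one computes
$\vr_-^2u^2-B=R\bigl(\vr_- u^2+\frac{R}{\vr_-\vr_+}\bigr)$,
which is positive for $R>0$, so in fact $\vr_- u>\sqrt{B}$ there (the inequality $\sqrt{B}>\vr_- u$ holds only for $R<0$). Second, your endpoint analysis at $\vr_1\to\vr_+^{+}$ is carried out outside the admissible range: for $R>0$ one has $\vr_->\vr_+$, the lemma only concerns $\vr_1>\max\{\vr_-,\vr_+\}=\vr_-$, and on that range $\vr_1-\vr_-<\vr_1-\vr_+$, so $\sqrt{(\vr_1-\vr_-)/(\vr_1-\vr_+)}<1$ and nothing blows up. These two mistakes produce the phantom contradiction that sends you into the inconclusive discussion of which factor of $\widetilde{\ep_1}$ ``drives the sign change''; step (3) of your plan is never actually executed, so the second inequality is not proved.

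The actual argument --- the one the paper simply reuses from Lemma \ref{l:42}, since the proof of Lemma \ref{l:51} supplies the needed inequalities $\sqrt{B}>\vr_+ u$ and $\vr_- u>\sqrt{B}$ for $R>0$ --- is much shorter and makes no reference to $\vr_{max}$ at all:
\[
\nu_+-v_{+2}=\frac{\vr_- u}{R}-\frac{\sqrt{B}}{R}\sqrt{\frac{\vr_1-\vr_-}{\vr_1-\vr_+}}>\frac{\vr_- u-\sqrt{B}}{R}>0 .
\]
Both inequalities of the lemma therefore hold on the whole half-line $\vr_1>\max\{\vr_-,\vr_+\}$; the restriction to $(\max\{\vr_-,\vr_+\},\vr_{max})$ in the statement comes solely from Lemma \ref{l:51} (the range where $\ep_1>0$), not from any sign change of $\nu_+-v_{+2}$. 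The case $R=0$ follows from \eqref{eq:num2}, \eqref{eq:nup2} with $0<u<2/\vr_-$, and the case $R<0$ is identical after interchanging the roles of $\vr_\pm$, where now $\sqrt{B}>\vr_- u$ and $\vr_+ u>\sqrt{B}$.
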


Therefore we know the correct order of the interface speeds $\nu_- < \nu_0 = \beta < \nu_+$. Since we also know that  $v_{-2} - \beta$ and $\beta - v_{+2}$ are positive, we can transform the admissibility inequalities \eqref{eq:E_left ss2}, \eqref{eq:E_right ss2} once again into \eqref{eq:adf1}, \eqref{eq:adf2}. We observe that in the case $\beta = 0$, the inequalities \eqref{eq:adf1},\eqref{eq:adf2} can be satisfied with a positive $\ep_2$ under the condition $\vr_1 > 2\max\{\vr_-,\vr_+\}$. Thus, in order to conclude the proof, it only remains to ensure that $\vr_{max} > 2\max\{\vr_-,\vr_+\}$.

In the case $R > 0$ this condition yields the following quadratic inequality for $u$:
\begin{equation}
    \vr_-\vr_+^2 u^2 - \vr_+^2 u - R = (\vr_+ u - 1)(\vr_-\vr_+ u + R)> 0,
\end{equation}
and hence we end up with $\vr_{max} > 2\vr_-$ whenever $u > \frac{1}{\vr_+}$. The cases $R = 0$ and $R < 0$ are similar.

Finally, as in the previous section, we use the the Galilean transformation argument  to claim that the choice $\beta = 0$ can be made without loss of generality.

In the case $R = 0$, i.e. $\vr_- = \vr_+$, and $\beta = 0$, the inequalities \eqref{eq:adf1}, \eqref{eq:adf2} are the same, so choosing $\ep_2 = \ep_1\frac{\vr_1 - 2\vr_-}{\vr_-} > 0$ we construct solutions satisfying the energy equality.


\section{Maximal dissipation criteria} \label{s:Rem}

Theorems \ref{t:main} and \ref{t:main2} are another examples in now a quite large series of results proving that the energy inequality \eqref{eq:energy} in itself is not strong enough to single out a unique physical solution in a set of (bounded) weak solutions in more than one space dimension. The list of such results was provided in the Introduction.

A natural question therefore rises, whether there is  another criterion strong enough to restore the uniqueness in the  multi--dimensional case. One of the concepts mentioned in the literature is the {\it principle of maximal dissipation of energy}. Roughly speaking, physical solutions should be those solutions that dissipate the most energy, or in mathematical terms produce the most entropy.

Motivated by Dafermos \cite{Da1} the {\em entropy rate} admissibility criterion was studied in \cite{ChiKre14} and \cite{Fe}. In order to define this criterion, we introduce the total energy $E_L[\vr,\vv]$ of a solution $(\vr,\vv)$ to \eqref{eq:euler} and the energy dissipation rate $D_L[\vr,\vv]$ as follows
\begin{align}
&E_L[\vr,\vv](t) = \int_{(-L,L)^2} \left(\vr\ep(\vr) + \vr\frac{\abs{\vv}^2}{2}\right)\dx, \label{eq:energy L}\\
&D_L[\vr,\vv](t) = \frac{\mathrm{d}_+ E_L[\vr,\vv](t)}{\dt}. \label{eq:dissipation rate L}
\end{align} 
Note that the restriction to the finite set $(-L,L)^2$ is necessary, since the solutions discussed in this paper clearly have the property $E_L[\vr,\vv] \sil \infty$ as $L \sil \infty$. Also note that $D_L[\vr,\vv]$ is a non--positive quantity for any admissible weak solution $(\vr,\vv)$.

\begin{definition}[Entropy rate admissible solution]\label{d:entropy rate}
A weak solution $(\vr,\vv)$ of \eqref{eq:euler} is called \textit{entropy rate admissible} if 
there exists $L^* > 0$ such that there is no other weak solution $(\overline{\vr},\overline{\vv})$ with the property that for some $\tau\geq 0$, $(\overline{\vr},\overline{\vv})(x,t)= (\vr,\vv)(x,t)$ on $\R^2 \times [0, \tau]$
and $ D_L[\overline{\vr},\overline{\vv}](\tau) < D_L[\vr,\vv](\tau) $ for all $L \geq L^*$.
\end{definition} 

Feireisl in \cite{Fe} proved global existence of infinitely many admissible weak solutions to \eqref{eq:euler} with the pressure $p(\vr) = \vr^\gamma$, $\gamma > 1$ starting from smooth density $\vr^0$ and some irregular velocity $\vv^0$. He also proved that none of these solutions is entropy rate admissible.

On the other hand, Chiodaroli and Kreml \cite{ChiKre14} showed that for the pressure $p(\vr) = \vr^\gamma, \gamma \in [1,3)$, there exist Riemann initial data \eqref{eq:Riem} for which the classical 1D solution consisting of two admissible shocks is not entropy rate admissible, in particular there exist solutions constructed by the method of De Lellis and Sz\' ekelyhidi \cite{DLSz1}, \cite{DLSz2} that dissipate more total energy than the classical 1D self-similar solution. Note that the authors in \cite{ChiKre14} do not claim that any of the nonstandard solutions is entropy rate admissible, on the contrary, as was also shown in \cite{Fe}, the feature of the construction of solutions based on the method of De Lellis and Sz\' ekelyhidi is that none of these solutions can be entropy rate admissible, because for each such solution there exists another one constructed by the same method that dissipates more energy.

As was pointed out by E. Feireisl in private discussions on the topic of maximal dissipation, the entropy rate admissibility criterion as defined in Definition \ref{d:entropy rate} only measures the dissipation of the total energy. A stronger version of the maximal dissipation criterion would take into account that the maximality should be achieved locally everywhere in space.

In order to accomodate this idea and to propose another admissibility criterion based on the notion of maximal dissipation, we first introduce the energy dissipation measure (or entropy production measure) $\mu[\vr,\vv] \in \mathcal{M}((0,\infty)\times\R^2)$ for a weak solution $(\vr,\vv)$ as follows
\begin{equation}
       \pat\left(\vr\ep(\vr) + \vr \frac{\abs{\vv}^2}{2}\right) + \Div\left(\left(\vr\ep(\vr) + \vr \frac{\abs{\vv}^2}{2} + p(\vr)\right)\vv\right) =: - \mu[\vr,\vv].
\end{equation}
It is clear that for any admissible weak solution $\mu[\vr,\vv]$ is a non--negative measure. In the terminology of hyperbolic conservation laws this measure is called {\it entropy production measure}.

\begin{definition}[Entropy production measure admissible solution]\label{d:entropy measure}
A weak solution $(\vr,\vv)$ of \eqref{eq:euler} is called \textit{entropy production measure admissible} if there is no other weak solution $(\overline{\vr},\overline{\vv})$ with the property that for some $\tau\geq 0$, $(\overline{\vr},\overline{\vv})(x,t)= (\vr,\vv)(x,t)$ on $\R^2 \times [0, \tau]$
and $ \mu[\overline{\vr},\overline{\vv}] > \mu[\vr,\vv]$ on $(\tau,\infty)\times \R^2$.
\end{definition}

We point out that nonstandard solutions to the system \eqref{eq:euler} with the pressure law $p(\vr) = \vr^\gamma$, $\gamma \geq 1$ constructed in \cite{ChiKre14} have the energy dissipation measure incomparable with the energy dissipation measure of the classical 1D solution consisting of two shocks, starting from the same initial data. This is a direct consequence of the fact, that the energy dissipation measure is supported on the shocks (for the classical 1D solution) or interfaces (in the case of nonstandard solutions) and in the construction in \cite{ChiKre14} the speeds of the interfaces cannot coincide with the shock speeds. The same also holds  for solutions constructed in \cite{ChiKre17} and \cite{MarKli17}.

Therefore it was possible to hope that the classical 1D solutions may be entropy production measure admissible,  at least in the class of solutions which can be constructed using the theory of De Lellis and Sz\' ekelyhidi, i.e. that one cannot construct nonstandard solutions with dissipation measure strictly larger than the dissipation measure of the classical 1D solution. However, solutions constructed in Theorem \ref{t:main2} violate this claim.

\begin{theorem}
Let $p(\vr) = -\vr^{-1}$. Assume that the Riemann initial data \eqref{eq:Riem} satisfy
\begin{equation}\label{eq:Th2cond3}
    \max\left\{\frac{1}{\vr_-}, \frac{1}{\vr_+}\right\} < v_{-2} - v_{+2} < \frac{1}{\vr_-} + \frac{1}{\vr_+}.
\end{equation}
Then the classical 1D solution to \eqref{eq:euler}, \eqref{eq:energy}, \eqref{eq:Riem} consisting of two (for $v_{-1} = v_{+1}$) or three (for $v_{-1} \neq v_{+1}$) contact discontinuities is not entropy rate admissible and it is not entropy production measure admissible.
\end{theorem}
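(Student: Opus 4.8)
The plan is to take one of the solutions constructed in the proof of Theorem \ref{t:main2}, arranged so that it dissipates a strictly positive amount of energy, and to compare it with the classical 1D solution; the latter, being a juxtaposition of contact discontinuities, satisfies \eqref{eq:energy} with equality, so its entropy production measure vanishes. With the choice $\tau=0$ in Definitions \ref{d:entropy rate} and \ref{d:entropy measure}, this comparison settles both criteria at once.

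First I would revisit the construction of Section \ref{s:Proof2}. After the Galilean reduction to $\beta=0$, under \eqref{eq:Th2cond3} one has $\vr_{max}>2\max\{\vr_-,\vr_+\}$, so I may choose $\vr_1\in(2\max\{\vr_-,\vr_+\},\vr_{max})$ -- whence $\ep_1>0$ by Lemma \ref{l:51} -- and then choose $\ep_2$ \emph{strictly} inside its admissible range, i.e. $0<\ep_2<\min\{\ep_1\frac{\vr_1-2\vr_-}{\vr_-},\,\ep_1\frac{\vr_1-2\vr_+}{\vr_+}\}$. With this choice the subsolution energy inequalities \eqref{eq:E_left} and \eqref{eq:E_right} -- equivalently, in reduced form, \eqref{eq:adf1} and \eqref{eq:adf2} -- across the left and right interfaces become \emph{strict}, the middle interface still carrying equality; undoing the Galilean transformation yields an admissible fan subsolution $(\overline{\vr},\overline{\vv},\overline{\tu})$ for the original data whose energy production density is a strictly positive constant along the left and right interfaces and zero along the middle one. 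Let $(\overline{\vr},\overline{\vv})$ be one of the admissible weak solutions generated from it by Proposition \ref{p:subs}, so $\vr=\overline{\vr}$ is the piecewise constant fan state and $|\vv|^2\bm{1}_{P_i}=C_i$.

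The crucial point is that the perturbations supplied by Lemma \ref{l:ci} do not change the entropy production. Since $|\vv|^2=C_i$ on $P_i$, the energy density $\vr\ep(\vr)+\vr\frac{|\vv|^2}{2}$ of $(\overline{\vr},\overline{\vv})$ coincides pointwise with the relaxed energy density entering \eqref{eq:admissible subsolution} (it depends only on $(t,x_2)$, is piecewise constant, and jumps only along the three interface rays), while the energy flux $(\vr\ep(\vr)+\vr\frac{|\vv|^2}{2}+p(\vr))\vv$ differs from the subsolution's relaxed energy flux only by $w:=\sum_{i=1}^{2}\big(\vr_i\ep(\vr_i)+\frac{\vr_i C_i}{2}+p(\vr_i)\big)\underline{\vv}_i$, which is spatially divergence free because $\Div\underline{\vv}_i=0$ by Lemma \ref{l:ci}(ii) and the prefactors are constant. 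Hence $\pat(\vr\ep(\vr)+\vr\frac{|\vv|^2}{2})+\Div\big((\vr\ep(\vr)+\vr\frac{|\vv|^2}{2}+p(\vr))\vv\big)$ equals the left-hand side of \eqref{eq:admissible subsolution}, so $\mu[\overline{\vr},\overline{\vv}]$ coincides with the entropy production measure of the subsolution: a nonnegative measure carried by the three interface rays, with strictly positive constant densities along the left and right interfaces and zero density along the middle one. In particular $\mu[\overline{\vr},\overline{\vv}]\ge 0$ and $\mu[\overline{\vr},\overline{\vv}]\not\equiv 0$.

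To conclude, recall from Section \ref{s:Riem} that the classical 1D solution, being made only of contact discontinuities, satisfies \eqref{eq:energy} with equality, so $\mu[\vr_{1D},\vv_{1D}]\equiv 0$; its energy density is again independent of $x_1$ and piecewise constant, jumping along the rays $x_2=\lambda_j t$. With $\tau=0$ the solution $(\overline{\vr},\overline{\vv})$ shares the initial data \eqref{eq:Riem} with $(\vr_{1D},\vv_{1D})$, and $\mu[\overline{\vr},\overline{\vv}]\ge 0=\mu[\vr_{1D},\vv_{1D}]$ with $\mu[\overline{\vr},\overline{\vv}]\ne\mu[\vr_{1D},\vv_{1D}]$, i.e. $\mu[\overline{\vr},\overline{\vv}]>\mu[\vr_{1D},\vv_{1D}]$ on $(0,\infty)\times\R^2$; thus the 1D solution is not entropy production measure admissible. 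For the entropy rate, both solutions have energy density independent of $x_1$ and piecewise constant in $x_2$, so for any $L>0$ and all small $t$ (so that the relevant rays stay inside $(-L,L)^2$) the maps $t\mapsto E_L[\vr_{1D},\vv_{1D}](t)$ and $t\mapsto E_L[\overline{\vr},\overline{\vv}](t)$ are affine, and $D_L[\,\cdot\,](0)$ is their slope; telescoping the energy jump relations along the rays (as in \cite{ChiKre14}) shows that the difference $D_L[\vr_{1D},\vv_{1D}](0)-D_L[\overline{\vr},\overline{\vv}](0)$ equals $2L$ times the total energy dissipated per unit time by $(\overline{\vr},\overline{\vv})$ along its two outer interfaces -- the far-field energy fluxes cancel, since the two solutions share the far-field states -- which is strictly positive by the choice of $\ep_2$. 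Hence $D_L[\overline{\vr},\overline{\vv}](0)<D_L[\vr_{1D},\vv_{1D}](0)$ for every $L>0$, so for any $L^*>0$ the competitor $(\overline{\vr},\overline{\vv})$ with $\tau=0$ shows that the 1D solution is not entropy rate admissible. The only delicate point is the identity $\mu[\overline{\vr},\overline{\vv}]=\mu[\text{subsolution}]$; granting it, the rest is bookkeeping with the Rankine-Hugoniot and energy jump relations of Sections \ref{s:Riem}--\ref{s:Proof2}.
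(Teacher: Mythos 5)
Your proposal is correct and follows the same route as the paper: the classical 1D solution, built only from contact discontinuities, has $D_L\equiv 0$ and $\mu\equiv 0$, while a nonstandard solution from Theorem \ref{t:main2} with $\ep_2$ chosen strictly below the bounds in \eqref{eq:adf1}--\eqref{eq:adf2} dissipates energy at a strictly positive rate on the outer interfaces, so with $\tau=0$ it defeats both criteria. The paper leaves implicit the point you verify explicitly --- that the perturbations from Lemma \ref{l:ci} do not alter the entropy production because $|\vv|^2=C_i$ on $P_i$ and the flux correction $\sum_i c_i\underline{\vv}_i$ is divergence free --- and your accounting of that step is accurate.
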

\begin{proof}
The theorem  is a direct consequence of Theorem \ref{t:main2}. Since the classical 1D solution $(\vr_c,\vv_c)$ consists only of contact discontinuities, we have that $D_L[\vr_c,\vv_c](t) = 0$ for all $L > 0$ and $t > 0$ and also $\mu[\vr_c, \vv_c] = 0$.

On the other hand, nonstandard solutions $(\vr,\vv)$ in Theorem \ref{t:main2} can be constructed in such a way that they satisfy the energy inequality \eqref{eq:energy} as a strict inequality on the interfaces $x = \nu_- t$, $x = \nu_+ t$. Thus these solutions satisfy $D_L[\vr,\vv](t) < 0$ for all $t > 0$ and $L > 0$ large enough with respect to $t$, and also $\mu[\vr, \vv] > 0$.
\end{proof}

We conclude with the remark, that
all results in this paper can be directly generalized to any space dimension higher than $2$.

\section*{Acknowledgements}
O. Kreml and V. M\'acha were supported by the GA\v CR (Czech Science Foundation) project GJ17-01694Y in the general framework of RVO: 67985840. O. Kreml acknowledges the support of the Neuron Impuls Junior project 18/2016.

\end{document}